\definecolor{arrow}{RGB}{154, 32, 64}
\theoremstyle{plain}
\newtheorem*{thm0}{Theorem}
\newtheorem{thm}{Theorem}[section]
\newtheorem{lem}[thm]{Lemma}  
\newtheorem{prop}[thm]{Proposition} 
\newtheorem{cor}[thm]{Corollary} 
\theoremstyle{definition}
\newtheorem{defn}[thm]{Definition} 
\newtheorem{ex}[thm]{Example}
\theoremstyle{remark}
\newtheorem{rmq}[thm]{Remark} 
\newtheorem{claim}[thm]{Claim}
\begin{document}
\normalem
\title{Homotopy braid groups are torsion-free}
\author{Emmanuel Graff}
%\address{Normandie Univ, UNICAEN, CNRS, LMNO, 14000 Caen, France}
%\email{emmanuel.graff@univ-grenobles-alpes.fr}
%\date{\today}
	%\subjclass[2020]{ 
	    %57K10 %knot theory
		%57M27, Invariants of knots and 3-manifolds
		%57K12,  %Generalized knots (virtual knots, welded knots, quandles, etc.) 
		%20F36, %Braid groups; Artin groups }
	
	%\keywords{Knot Theory, Links, Braid groups, Link-homotopy, Claspers, Welded Braid, Arrow calculus, w-tree.}

 \begin{abstract}

We show that, for any number of components, the group of braids up to link-homotopy is torsion-free. This generalizes a result of Humphries up to six components, and provides an explicit solution to a question posed by Lin and addressed by Linell and Schick regarding the existence of non-abelian torsion-free quotients of the braid group. The proof relies on the diagrammatic theory of welded braids and uses the Artin representation. As a corollary, we obtain yet another proof that braid groups themselves are torsion-free.
\end{abstract}

\maketitle
\section*{Introduction}
Artin in \cite{ArtinBraid} is the first author to mention the notion of \emph{link-homotopy}, in the context of braids.\footnote{For links, this theory was deeply investigated by \cite{MilnorLinkgrp,Levine4comp,HabeggerLinHomotopy}.} This is an equivalence relation that allows continuous deformations during which two distinct components remain disjoint at all times, but each component can self-intersect. Subsequently, numerous authors have explored braids up to link-homotopy.

Among them, Goldsmith, in \cite{GoldsmithHomotopybraids}, answers a question of Artin about the distinction between isotopy and link-homotopy of braids, by providing an example of a non-trivial braid, up to isotopy, that is trivial up to link-homotopy. She also provided a presentation of the \emph{homotopy braid group}, which emerges as a quotient of the standard braid group. Habegger and Lin, in \cite{HabeggerLinHomotopy}, examined pure braids as reduced free group automorphisms, through a homotopy version of \emph{Artin's representation}. Additionally, Humphries in \cite{HumphriesTorsion} addressed the question of torsion in the homotopy braid group, proving torsion-freeness for six components or less.

This \emph{torsion problem} also arises in \cite{BardaVershinJieHomotopyBraid}, where the authors mention the broader question posed by Lin, formulated in \cite{LinBraidsPermutationPolynomials} and addressed in the Kourovka notebook \cite{MazurovKhukhroKourovkaNotebook}: \lq Is there a non-trivial epimorphism of the braid group onto a non-abelian group without torsion?\rq. Linnell and Schick, in \cite{LinnellchickFinititgrpextAtiyahconj}, provide a complete solution by showing that the braid group is residually torsion-free nilpotent-by-finite, hence in particular has plenty of non-trivial torsion-free quotients. However, they only give an existence proof, and explicit examples are not known for more than six components. Our main result is a complete solution to the torsion problem for any number of components.

\begin{thm0}(Theorem \ref{thmtorsionfree})
     The homotopy braid group is torsion-free for any number of components.
\end{thm0}

Additionally, as a corollary of this result, we recover the well-known result that the braid group is torsion-free for any number of components (Corollary \ref{coroBntorsionfree}). This fact, originally due to Fadell and Neuwirth in \cite[Thm. 8]{FadellNeuwirthConfSpace}, also follows from the stronger property of left-orderability established by Dehornoy in \cite{DehornoyBraidGrpLeftDistribuOp}. The property of left-orderability for the homotopy braid group is not known to this day and constitutes an interesting open question, as discussed in Remark \ref{rmqdehornoy}.

Our result relies heavily on the theory of \emph{welded braids}, which are a diagrammatic generalization of braids, allowing virtual crossings in addition to the classical ones. They are regarded up to certain local deformations that generalize the usual Reidemeister moves. Welded braids can be defined through various equivalent definitions that have been investigated by numerous authors across different contexts \cite{FennRimanyiRourkeBraidPermuGrp,SavushkinaGrpConjAutoFreeGrp,McCoolBasisConjAutoFreeGrp,BaezWiseCransExoticStatsStrings4DTheory,BrendleHatcherConfigspaceRingsWickets,ABMWhomotopyribbontube}. While we won't discuss these various perspectives here, we direct interested readers to Damiani’s survey \cite{celestejourneyloopbraidgrp} for further details. Instead, our focus lies on the notion of link-homotopy for welded objects, alongside its interpretation in terms of \emph{arrow calculus}.

The notion of \emph{$w$-arrows}, and more generally, \emph{$w$-trees}, developed in \cite{JBYasuArrowcalc}, is a welded version of Habiro's claspers \cite{HabiroClasp}. These are diagrammatic tools upon which surgery operations can be performed. Their manipulation up to link-homotopy is described in \cite[\S9]{JBYasuArrowcalc} in what is referred to as homotopy arrow calculus. This homotopy arrow calculus, will be central to our study.
\bigskip

The paper is divided into two sections. In the first preparatory section, we review the notion of homotopy welded braids along with its associated arrow calculus. Additionally, we recall the definition of the Artin representation in this context. Then, in the second section, we prove our main theorem in two steps: first, a topological characterization of torsion elements in the homotopy braid group using the Artin representation; then, an algebraic step where we show that the previously established characterization is never satisfied for classical braids.

\medskip\emph{Acknowledgement}: The author thanks Jean--Baptiste Meilhan and Akira Yasuhara for their careful reading and pertinent corrections. The author also thanks Luis Paris for an insightful comment that led to one of the key ideas.

\section{Preliminaries}
In this section, we review the notion of braids and their welded extension, focusing specifically on their study up to link-homotopy. Additionally, we present the theory of homotopy arrow calculus. Finally we recall the Artin representation in this context. 

\subsection{Welded Braids}\label{sectionwelded}

Let us take $n$ fixed points, in the unit interval $[0,1]$, denoted by $p_1<p_2<\cdots<p_n$.

\begin{defn}\label{defweldedbraid}
An \emph{$n$-component welded braid diagram} $\beta=(\beta_1,\ \ldots,\ \beta_n)$ is the oriented\footnote{In this paper, the adopted convention for the orientation of welded braid diagrams is from top to bottom.} image of an immersion \[(\beta_1,\ \ldots,\ \beta_n) :\underset{i\leq n}{\bigsqcup}[0,1] \to [0,1]\times [0,1]\] 
such that, for some permutation of $\{1,\ \ldots,\ n\}$ associated to $\beta$ and denoted by $\pi(\beta)$, we have $\beta_i(0)=(p_i,\ 0)$ and $\beta_i(1)=(p_{\pi(\beta)(i)},\ 1)$ for each $i$. We require the singularities to be a finite number of transverse double points, labeled either as classical crossings or as virtual crossings, as illustrated in Figure \ref{figcrossings}. Additionally, we require the immersion to be monotonic, which means that $\beta_i(t)\in [0,1]\times \{t\}$ for any $t\in[0,\ 1]$ and any $i$. We call the image of $\beta_i$, the \emph{$i$-th component} of $\beta$. 

\begin{figure}[!htbp]
    \centering
    \includegraphics{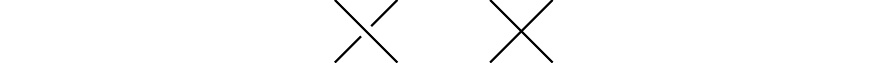}
    \caption{A classical and a virtual crossing.}
    \label{figcrossings}
\end{figure}
\end{defn}

A diagram whose associated permutation is the identity is said to be \emph{pure}. A diagram with no virtual crossings is called \emph{classical}.

\begin{defn}
\emph{Welded link-homotopy} is the equivalence relation generated by planar isotopies and the following local moves:
\begin{itemize}
    \item classical Reidemeister moves,
    \begin{figure}[!htbp]
    \centering
    \includegraphics[page=1]{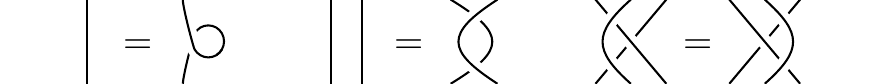}
    \end{figure}
    \item virtual Reidemeister moves,
    \begin{figure}[!htbp]
    \centering
    \includegraphics[page=2]{torsionweldedlocalmoves.pdf}
    \end{figure}
    \item mixed Reidemeister move, $\hspace{4cm}\bullet$ OC move,
    \begin{figure}[!htbp]
    \centering
    \includegraphics[page=3]{torsionweldedlocalmoves.pdf}
    \end{figure}
    \item self-virtualization move.
    \begin{figure}[!htbp]
    \centering
    \includegraphics[page=4]{torsionweldedlocalmoves.pdf}
    \end{figure}
\end{itemize}
\end{defn}

Generally, welded braid diagrams are studied up to \emph{welded isotopy}, which is defined by the same set of moves except for the self-virtualization move. 

Note that we do not require welded link-homotopy to preserve the monotonic property during the deformation. Indeed, in order to apply the self-virtualization move, we need self-crossings, which are prohibited by monotonicity. Furthermore, if a diagram is not monotonic, we can apply a welded link-homotopy to convert it into a monotonic diagram (see \cite[Thm. 4.1]{ABMWUsualVirtualWeldedObjHomotopy}). Therefore, since we are working up to link-homotopy, we can freely forget the monotonic condition, which will be useful later, as \emph{arrow surgery} typically does not preserve this property. 

Moreover, note that the self-virtualization move generates the \emph{self-crossing change}, where a classical crossing involving two strands from the same component is changed into its opposite. This proves, in particular, that the usual \emph{link-homotopy}\footnote{Link-homotopy, in classical braid theory, is the equivalence relation defined by planar isotopy, classical Reidemeister moves, and self-crossing changes.} implies the welded link-homotopy.

The set of welded braid diagrams up to welded link-homotopy, equipped with the stacking operation, forms a group: the \emph{homotopy welded braid group}, denoted by $hWB_n$. Elements of $hWB_n$ are called \emph{homotopy welded braids}. The set of pure welded braid diagrams up to welded link-homotopy forms a subgroup of $hWB_n$ denoted by $hWP_n$ and called the \emph{pure homotopy welded braid group}. Similarly, the set of classical braid diagrams up to link-homotopy, equipped with the stacking operation, forms a group called the \emph{homotopy braid group} and denoted by $hB_n$. Elements of $hB_n$ are called \emph{homotopy braids}. Finally, The set of pure classical braid diagrams up to link-homotopy forms a subgroup of $hB_n$ denoted by $hP_n$ and called the \emph{pure homotopy braid group}. 

In the following proposition, we recall the know fact that the set-theoretic inclusion of $hB_n$ in $hWB_n$ is injective; a proof can be found in \cite[Prop. 4.2.23]{Moithese}.

\begin{prop}\label{propbraidembedinwelded}
The homotopy braid group $hB_n$ injects into the homotopy welded braid group $hWB_n$.
\end{prop}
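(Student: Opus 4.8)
The plan is to reduce the injectivity of $hB_n\hookrightarrow hWB_n$ to the faithfulness of a reduced Artin representation. Recall Artin's theorem \cite{ArtinBraid} that the Artin representation $B_n\to\operatorname{Aut}(F_n)$ is faithful, and that it extends to a representation of the welded braid group $WB_n\to\operatorname{Aut}(F_n)$ identifying $WB_n$ with the group of permutation-conjugacy automorphisms of $F_n$, hence faithful as well \cite{McCoolBasisConjAutoFreeGrp,SavushkinaGrpConjAutoFreeGrp,celestejourneyloopbraidgrp}. Next I would check that the target descends to link-homotopy: writing $RF_n$ for Milnor's reduced free group \cite{MilnorLinkgrp}, the quotient of $F_n$ by the normal closure of the commutators $[x_i,\,wx_iw^{-1}]$ with $w\in F_n$, the welded Artin representation factors through a homomorphism $hWB_n\to\operatorname{Aut}(RF_n)$. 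Only the self-virtualization move requires argument here: one must see that changing a classical self-crossing into a virtual one does not change the induced automorphism of $RF_n$, which comes down precisely to the defining relators of $RF_n$. (This is the homotopy Artin representation introduced in the next subsection.) Altogether one obtains a commutative chain $B_n\to hB_n\to hWB_n\to\operatorname{Aut}(RF_n)$ whose total composite is the reduced Artin representation of Milnor and Habegger--Lin.

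It then suffices to show the composite $\rho\colon hB_n\to\operatorname{Aut}(RF_n)$ is injective, because its first factor $hB_n\to hWB_n$ is then injective too. If $\beta\in\ker\rho$, then the induced action on $RF_n^{\mathrm{ab}}\cong\mathbb Z^n$ is by the permutation $\pi(\beta)$, so $\pi(\beta)$ is trivial and $\beta\in hP_n$; and the reduced Artin representation is faithful on the pure homotopy braid group $hP_n$ by the work of Habegger--Lin \cite{HabeggerLinHomotopy}, in which pure homotopy braids are studied precisely as reduced free group automorphisms (cf.\ also Goldsmith's presentation \cite{GoldsmithHomotopybraids}). Hence $\beta$ is trivial. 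For a reference carrying this argument out in detail, see \cite[Prop.\ 4.2.23]{Moithese}.

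I expect the genuine difficulty to lie in the two points flagged above: the invariance of the homotopy Artin representation under the self-virtualization move, and the faithfulness of the reduced Artin representation on $hP_n$; everything else is routine. It is worth noting why no softer argument works: welded link-homotopy is allowed to pass through diagrams with virtual crossings, so a priori a non-trivial classical braid could become trivial only after leaving the classical world, and one therefore needs an invariant defined on all welded diagrams --- the reduced Artin representation --- that still detects classical braids. A more hands-on alternative, which I would expect to be harder to make rigorous, would be to analyze the kernel of $WB_n\to hWB_n$ directly and show that it meets $B_n\subset WB_n$ exactly in the kernel of $B_n\to hB_n$, using that the self-virtualization move generates the self-crossing change.
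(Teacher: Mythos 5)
Your argument is correct. Note that the paper does not prove this proposition itself --- it defers entirely to \cite[Prop.~4.2.23]{Moithese} --- so there is no in-text proof to compare against; but your route (factor the reduced Artin representation as $hB_n\to hWB_n\to \operatorname{Aut}(\mathcal{R}F_n)$, check invariance under self-virtualization, reduce to the pure case via the permutation, and invoke Habegger--Lin's faithfulness on $hP_n$) is exactly the standard argument and uses the same machinery the paper sets up in \S\ref{sectionrpz} (Definition \ref{defweldedartinrpz}, Proposition \ref{propartininj}). The two points you flag as carrying the real content are indeed the right ones, and both are covered by the references you cite.
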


\subsection{Arrow calculus}
Let us now review the \emph{arrow calculus} developed by Meilhan and Yasuhara in \cite{JBYasuArrowcalc}. More precisely, let us consider the \emph{homotopy arrow calculus}, which deals with welded link-homotopy and will prove to be a central tool in our study.

\begin{defn}\cite[Def. 3.1]{JBYasuArrowcalc}
A \emph{$w$–tree} for a welded braid diagram $\beta$ is an immersed connected uni-trivalent tree $T$, such that
\begin{itemize}
    \item The univalent vertices of $T$ are pairwise disjoint and are contained in $\beta\setminus\{$crossings of $\beta\}$.
    \item There are finitely many singularities that are transverse double points of only two possible kinds:
    \begin{itemize}
        \item virtual crossings between edges of $T$,
        \item virtual crossings between strands of $\beta$ and edges of $T$.
    \end{itemize}
    \item All edges of $T$ are oriented, such that each trivalent vertex has two ingoing edges and one outgoing edge.
    \item Each edge of $T$ is assigned a number (possibly zero) of involutive\footnote{By involutive, we mean that twists on a same edge cancel pairwise.} decorations, called \emph{twists}, which are disjoint from all vertices and crossings.
\end{itemize}
A $w$–tree with a single edge is called a \emph{$w$–arrow}. We define the \emph{degree} of $T$, denoted by $\deg(T)$, as its number of trivalent vertices plus one.
\end{defn}

\vspace{-4pt}
\begin{figure}[!htbp]
    \centering
    \includegraphics{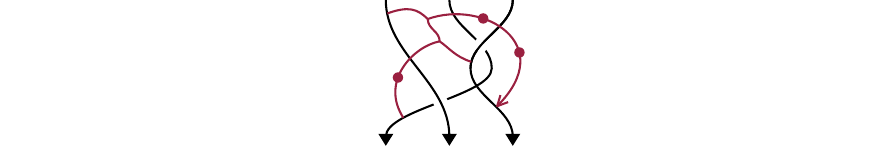}
    \vspace{-2pt}
    \caption{Example of a degree $3$ $w$-tree for a welded braid diagram.}
    \label{Weldedbraidarrowex}
\end{figure}

The unique univalent vertex with an ingoing edge is called the \emph{head} of the $w$-tree. By graphic convention, it is represented by an arrow in the figures. The other univalent vertices are called \emph{tails}. When we do not need to distinguish between tails and head, we simply call all univalent vertices, \emph{endpoints}. In the figures, portions of the diagram are represented by black lines and $w$-trees edges by red lines. Finally, twists are represented graphically by red dots \textcolor{arrow}{$\bullet$}. See Figure \ref{Weldedbraidarrowex} for an example.

Given a union of $w$-trees $F$ for a welded braid diagram $\beta$, there is a procedure called \emph{surgery} detailed in \cite{JBYasuArrowcalc} to construct a new diagram denoted $\beta_F$. We illustrate in Figure \ref{arrowsurgery} the surgery along a $w$-arrow.
\begin{figure}[!htbp]
    \centering
    \includegraphics[page=1]{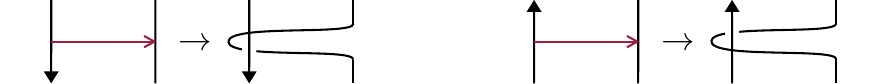}
    \caption{Surgery on a $w$-arrow.}
    \label{arrowsurgery}
\end{figure}
Note that the orientation of the strand containing the tail, needs to be specified to define the surgery move. In the case where a $w$-arrow contains some twist, surgery introduces a virtual crossing, as shown on the left-hand side of Figure \ref{arrowsurgery2}. Moreover, if the edge of the $w$-arrow intersects the diagram $\beta$, or another edge of a $w$-arrow, then the surgery introduces virtual crossings as indicated on the right-hand side of Figure \ref{arrowsurgery2}.

\begin{figure}[!htbp]
    \centering
    \includegraphics[page=2]{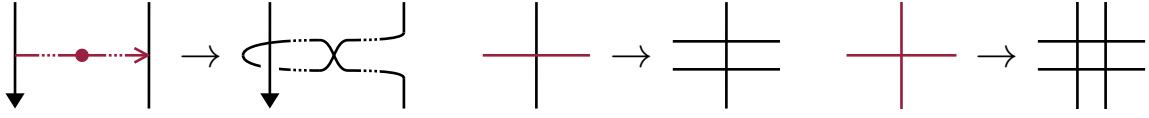}
    \caption{Surgeries near a twist and crossings.}
    \label{arrowsurgery2}
\end{figure}

Now if $F$ contains some $w$-trees with degree higher than one, we first apply the \emph{expanding rule} shown in Figure \ref{expandingrule}\footnote{Here and in the following figures, we use the diagrammatic convention adopted in \cite[Convention 5.1]{JBYasuArrowcalc}.} at each trivalent vertex: this breaks up $F$ into a union of $w$-arrows, on which we can perform surgery.

\begin{figure}[!htbp]
    \centering
    \includegraphics{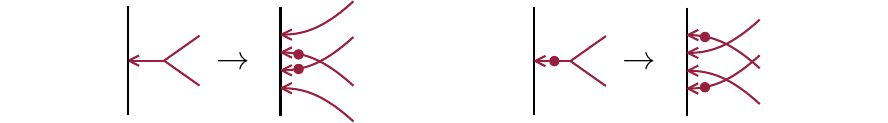}
    \caption{The expanding rule.}
    \label{expandingrule}
\end{figure}

Welded braid diagram composition yields a notion of \emph{product} for $w$-trees, as follows. We say that a union $F$ of $k$ $w$-trees ($k\geq2$) for the trivial braid $\mathbf 1$, is a product if \[\mathbf 1_F=\prod_{i=1}^k\mathbf 1_{T_i},\] where $T_i$ is a single $w$-tree for each $i$.

\emph{Homotopy arrow calculus} refers to the set of operations on unions of welded braid diagrams with some $w$-trees, which yield welded link-homotopic surgery results. These operations are defined and developed in \cite[\S9]{JBYasuArrowcalc}. In the rest of this section, we describe some of them, that will be used later.

\begin{defn}\label{defarrowiso}
\emph{Arrow isotopy} is the equivalence relation generated by planar isotopies, virtual Reidemeister moves involving edges of $w$-trees and/or strands of diagrams, and the following local moves:

\begin{figure}[!htbp]
    \centering
    \includegraphics{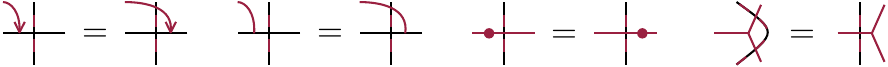}
\end{figure} 

\noindent here the vertical strands are either edges of $w$-trees or strands of diagrams.
\end{defn}
\begin{lem}\cite[Lem. 5.6]{JBYasuArrowcalc}\label{lemarrowmoves}
Two $w$-trees related by an arrow isotopy have welded isotopic, hence link-homotopic surgery results.
\end{lem}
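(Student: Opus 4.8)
I would prove Lemma~\ref{lemarrowmoves} by verifying the assertion on each of the generators of arrow isotopy, using that ``welded isotopic'' is an equivalence relation and that surgery along a disjoint union $F$ of $w$-trees is carried out tree by tree, so that a move performed on one $w$-tree of $F$ while the others are dragged along still yields a surgery result that factors as before. Planar isotopies are immediate: surgery along a $w$-tree $T$ replaces a regular neighbourhood of $T$, together with the arcs of $\beta$ meeting its endpoints, by a fixed local model (Figures~\ref{arrowsurgery} and~\ref{arrowsurgery2}), so a planar isotopy of $T$ inside $[0,1]^2$ carrying $\beta$ along induces a planar isotopy of $\beta_T$. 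The same local model handles virtual Reidemeister moves between edges of $w$-trees and/or strands of $\beta$: by Figure~\ref{arrowsurgery2} every transverse intersection of an edge with a strand or with another edge, as well as every twist, is turned by surgery into a virtual crossing of $\beta_F$, so a virtual move among edges and strands becomes, after surgery, a sequence of virtual moves among the corresponding strands of $\beta_F$.

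There remain the finitely many local moves displayed in Definition~\ref{defarrowiso}. For each of them I would first apply the expanding rule of Figure~\ref{expandingrule} at every trivalent vertex occurring in the picture, reducing the move to an identity, up to welded isotopy, between two diagrams obtained by surgery along unions of $w$-arrows; then draw both surgery results explicitly via the local model; and finally exhibit a finite sequence of classical Reidemeister moves (essentially~II and~III), virtual Reidemeister moves, the mixed move and the OC move relating the two sides. A systematic simplification is available because only \emph{welded} isotopy is required (the ``hence link-homotopic'' clause is then automatic, welded link-homotopy being coarser): the fingers of $\beta$ created by surgery carry many virtual crossings with the ambient strands, but by repeated use of the mixed and OC moves these can all be pushed out of the region where the two diagrams differ, so the comparison reduces to a short Reidemeister~II/III calculation between the underlying classical tangles.

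The main obstacle will be the bookkeeping in the moves where an edge, or an endpoint together with its adjacent edge, is dragged across a bundle of vertical strands: after surgery the dragged finger of $\beta$ must be slid across the corresponding bundle, which costs a controlled but nontrivial number of Reidemeister~II/III moves and a careful tracking of which crossings become virtual; and if such a move also involves a trivalent vertex, the expanding rule must be applied on both sides, producing nested parallel fingers whose configurations of parallel strands and virtual crossings have to be matched up. This is a finite verification, but it is precisely where the diagrammatic conventions of \cite[Convention~5.1]{JBYasuArrowcalc} are essential; the remaining moves, those merely sliding a twist or moving a single endpoint past one strand, reduce to a single Reidemeister~II or~III move together with a relabelling of virtual crossings, and are routine once the surgery results are drawn.
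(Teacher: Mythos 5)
The paper does not actually prove Lemma~\ref{lemarrowmoves}; it imports it wholesale from \cite{JBYasuArrowcalc}, where the argument is precisely the finite diagrammatic verification you describe: reduce to the generating moves of arrow isotopy, use the local surgery models of Figures~\ref{arrowsurgery} and~\ref{arrowsurgery2} so that planar isotopies and virtual moves on edges become detour moves on the doubled strands, and check the remaining local moves of Definition~\ref{defarrowiso} by expanding at trivalent vertices and a short Reidemeister~II/III computation. Your outline is correct and follows essentially the same route as the cited proof, including the observation that only welded isotopy needs to be established since welded link-homotopy is coarser.
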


\begin{defn}
A $w$-tree for a welded braid diagram $\beta$ is \emph{repeated} if it intersects a component of $\beta$ in at least two endpoints. 
\end{defn}
\begin{lem}\cite[Lem. 9.2 ]{JBYasuArrowcalc}\label{lemarrowrepeat}
Surgery along a repeated $w$-tree does not change the welded link-homotopy class of the welded braid diagram.
\end{lem}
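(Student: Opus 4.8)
The plan is to argue by induction on $\deg(T)$, disposing of the base case of a repeated $w$-arrow directly with the self-virtualization move, and reducing the case of higher degree to smaller degree via the expanding rule.

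\textbf{Base case.} Suppose $\deg(T)=1$, so $T$ is a $w$-arrow whose head and tail both lie on a single component $C$ of $\beta$. I would first use arrow isotopy (Lemma~\ref{lemarrowmoves}) together with virtual Reidemeister and mixed moves to slide the head of $T$ along $C$ until it is adjacent to the tail and the edge of $T$ meets the rest of the diagram only in virtual crossings. The surgery result $\beta_T$ then agrees with $\beta$ outside a small disk, inside which one sees a \emph{self-clasp} of $C$: a pair of crossings between two strands of the same component, together with a single virtual crossing if $T$ carries a twist. Applying the self-crossing change — which is generated by the self-virtualization move — to one of the two clasp crossings frees the clasp; the finger of $C$ created at the head then slides back along the edge of $T$ through virtual Reidemeister and mixed moves, recovering $\beta$. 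Hence $\beta_T$ and $\beta$ are welded link-homotopic.

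\textbf{Inductive step.} Assume $\deg(T)=d\ge2$ and that the lemma holds for every repeated $w$-tree of degree $<d$ (on any welded braid diagram). Since $T$ is repeated, two of its endpoints lie on a common component $C$; choose a trivalent vertex $v$ of $T$ on the path inside $T$ joining these two endpoints. Expanding $T$ at $v$, and then at the remaining trivalent vertices, rewrites $\beta_T$, up to welded isotopy, as a product of surgeries along $w$-trees of degree strictly less than $d$. Following the two distinguished endpoints through this expansion, one checks that each resulting factor is either itself a repeated $w$-tree — hence removable by the induction hypothesis — or else the factors assemble into pairs that are inverse to one another up to welded link-homotopy. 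This last point is exactly where the hypothesis that the two endpoints lie on the \emph{same} component $C$ enters: the expansion produces two parallel strands running along $C$ near those endpoints, so the surgery moves attached to them can be transposed by self-crossing changes of $C$ and then cancelled in pairs. Performing all these removals and cancellations reduces $\beta_T$ to $\beta$ up to welded link-homotopy.

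\textbf{Expected main difficulty.} All the substance is in the inductive step: one must make the expanding rule interact cleanly with the \emph{repeated} condition and verify that the factors which are not outright repeated really do organize into cancelling inverse pairs. This requires careful bookkeeping of edge orientations, twists, and the nesting introduced by the expansion, together with repeated appeals to the self-crossing change to commute clasps into cancelling position; in particular the sub-case in which the head of $T$ shares its component with one of the tails needs, in addition, a move sliding that tail up to the head. By contrast, once the self-virtualization move is in hand the base case is essentially immediate.
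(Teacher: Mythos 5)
First, a point of comparison: the paper does not prove this lemma at all --- it is quoted directly from Meilhan--Yasuhara \cite[Lem.~9.2]{JBYasuArrowcalc}, so there is no in-paper argument to measure your proposal against. Judged on its own terms, your base case is correct and standard: surgery along a repeated $w$-arrow produces a clasp between two strands of a single component, and one self-crossing change (generated by the self-virtualization move) undoes it.

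The inductive step, however, has a genuine gap, and it sits exactly at the point you flag as the expected main difficulty. Expanding $T$ at the trivalent vertex adjacent to its head writes $\beta_T$ as $\beta_{T_1'\cup T_2'\cup \overline{T_1'}\cup\overline{T_2'}}$, a commutator-like word whose four heads are consecutive on the strand carrying the head of $T$. If the two endpoints witnessing repeatedness lie in the same subtree $T_i$ (or one of them is the head), then $T_i'$ and $\overline{T_i'}$ are repeated of strictly smaller degree, the induction applies, and the remaining pair cancels: fine. But in the \emph{cross case} --- one distinguished tail in $T_1$, the other in $T_2$ --- none of the four factors is repeated, and to cancel them you must transpose the two middle \emph{heads}. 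That transposition is not achieved by ``self-crossing changes of $C$'': $C$ is the component carrying the tails, whereas the obstruction lives at the heads, which in general sit on a different component; and tails already exchange freely by Lemma \ref{lemexchange}, so nothing is gained on the $C$ side. The only available move is the Heads exchange of Lemma \ref{lemexchange}, whose correction term $\tilde T$ is the tree obtained by regrafting $T_1$ and $T_2$ at a new head-adjacent vertex: it has the same head position, the same set of tails, and degree $\deg T_1' + \deg T_2' = \deg T$. So the leftover is a repeated tree of the \emph{same} degree as $T$ --- essentially $T$ again with its subtrees permuted and some twists --- and the induction on degree does not close. Closing it requires an additional ingredient (a secondary induction, or a separately established move killing a tree with two tails adjacent on a common component), which is precisely the content of the argument in \cite{JBYasuArrowcalc} that your sketch does not supply.
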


The following lemma describes how to manipulate endpoints up to welded link-homotopy.
\begin{lem}\label{lemexchange}\cite{JBYasuArrowcalc} Let $T$ and $S$ denote two $w$-trees for a given welded braid diagram $\beta$. We have the following local moves up to welded link-homotopy.\footnote{By the notation $T=S$ we mean that $\beta_T$ and $\beta_S$ are equal in $hWB_n$}
\begin{itemize}
 \item Head/Tail reversal. If $T'$ is obtained from $T$ by modifying its head, resp. one of its tail, as shown on the left, resp. right of the figure, then $T=T'$. 
\end{itemize}
\begin{figure}[!htbp]
    \centering
    \includegraphics{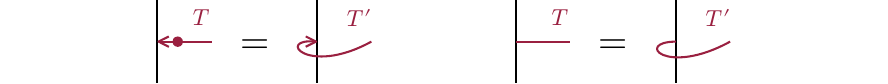}
\end{figure}
\begin{itemize}
\item \emph{Tails exchange}. If $T$ and $S$ have two adjacent tails and if $T'\cup S'$ is obtained from $T\cup S$ by exchanging these tails, then $T\cup S=T'\cup S'$ as shown on the left of the figure.
 \item \emph{Heads exchange}. If the heads of $T$ and $S$ are adjacent and if $T'\cup S'$ is obtained from $T\cup S$ by exchanging these heads as depicted in the middle of the figure, then $T\cup S=T'\cup S'\cup \tilde T$, where $\tilde T$ is as shown in the figure.
\item \emph{Head/Tail exchange}. If the head of $T$ is adjacent to a tail of $S$ and if $T'\cup S'$ is obtained from $T\cup S$ by exchanging these endpoints as depicted on the right of the figure, then $T\cup S=T'\cup S'\cup \tilde T$, where $\tilde T$ is as shown in the figure.
\end{itemize}

\begin{figure}[!htbp]
    \centering
    \includegraphics{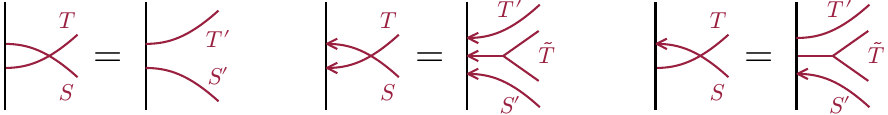}
\end{figure}
\end{lem}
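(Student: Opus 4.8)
The plan is to reduce every one of these moves to a statement about $w$-arrows and then verify it by performing the surgery explicitly and simplifying the resulting diagram with Reidemeister moves. Since each $w$-tree decomposes into a union of $w$-arrows via the expanding rule of Figure~\ref{expandingrule}, and since that rule is local near the trivalent vertices, any modification taking place near a single endpoint of $T$ (its head or one of its tails) can be analysed on the $w$-arrow incident to that endpoint, the rest of the tree being carried along unchanged. Likewise a modification involving two adjacent endpoints of $T$ and $S$ only concerns the two $w$-arrows meeting those endpoints. Thus it suffices to establish each local identity for a single $w$-arrow, possibly carrying twists, and for two $w$-arrows sharing adjacent endpoints; the general $w$-tree statements then follow at once. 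Throughout, arrow isotopy (Lemma~\ref{lemarrowmoves}) is used freely to bring the relevant portions of the $w$-trees into a standard position before each computation.

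For the head/tail reversal I would draw the surgered diagram $\beta_T$ in a neighbourhood of the endpoint in question. Reversing the head of a $w$-arrow amounts to inserting the surgery arc with the opposite orientation along the strand it points to; the asserted picture, with the prescribed twist, is obtained from the original one by a finite sequence of planar isotopies, virtual Reidemeister moves, and one classical Reidemeister~II move — exactly the computation that undoes a reversed clasp. The tail reversal is the same computation read from a univalent vertex of the other type, the extra twist recording the orientation flip of the strand carrying the tail. For the tails exchange, two adjacent tails on a strand correspond, after surgery, to two consecutive arcs grabbing that strand; swapping them is realised by virtual Reidemeister moves together with a Reidemeister~II move, so the two surgered diagrams coincide on the nose.

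The heads exchange and the head/tail exchange are handled in the same spirit, but now pushing one head past another endpoint forces the two corresponding surgery arcs to cross. Resolving those crossings by Reidemeister moves produces, besides the expected $T'\cup S'$, an additional small arc that one recognises as the surgery picture of a new $w$-tree $\tilde T$, precisely the commutator-type correction drawn in the figure. The identification of $\tilde T$ — which strand its new head attaches to, which tails it inherits, and how many twists the surgery near the crossings (Figure~\ref{arrowsurgery2}) contributes to its edges — is the delicate part of the argument.

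I expect this bookkeeping to be the main obstacle: $\tilde T$ is a genuinely higher-degree $w$-tree, and getting its head, tails, orientations and twists exactly right requires a patient chase through the surgery procedure. Working up to welded link-homotopy lightens the task, since Lemma~\ref{lemarrowrepeat} lets one discard at each stage any $w$-tree that has become repeated, and the self-virtualization move absorbs the spurious self-crossings created when arcs are slid across one another. Once the $w$-arrow versions are settled, the general $w$-tree statements follow by the expanding-rule reduction noted above; all of this is carried out in detail in \cite{JBYasuArrowcalc}, from which the lemma is quoted.
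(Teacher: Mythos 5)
The paper does not prove this lemma: it is stated with a citation to \cite{JBYasuArrowcalc} and used as a black box (the only added commentary is the remark, just after the statement, that all moves except the Head/Tail exchange already hold up to welded \emph{isotopy}). So there is no in-paper proof to compare against, and your proposal should be judged as a reconstruction of the argument in the cited reference. As such, your strategy is the right one and matches what Meilhan and Yasuhara actually do: reduce to $w$-arrows via the expanding rule, perform the surgery explicitly, simplify with detour (virtual plus mixed Reidemeister) moves, OC moves and Reidemeister II, and recognize the leftover arcs in the heads-exchange and head/tail-exchange cases as the surgery picture of the correction tree $\tilde T$.

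Two caveats. First, the proposal is a plan rather than a proof: the entire mathematical content of the exchange moves is the precise identification of $\tilde T$ (its head, its tails, its twists), and you explicitly defer that computation; nothing in the sketch would detect an error in the figure if there were one. Second, two smaller inaccuracies. The reduction "a modification near one endpoint of $T$ concerns only the $w$-arrow incident to that endpoint" is not quite right for the head: expanding a $w$-tree at the trivalent vertex adjacent to its head produces \emph{several} parallel $w$-arrows all ending near the original head, so the head reversal and the heads exchange must be verified for a parallel family of heads, not a single one (this is handled in the reference, but it is a genuine extra step). And your appeal to the self-virtualization move and to Lemma \ref{lemarrowrepeat} is only needed for the Head/Tail exchange; invoking them for the reversal and the tails/heads exchanges obscures the fact, used later in the paper, that those three moves are valid up to welded isotopy alone.
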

We remark that the Head/Tail reversal, the Tails exchange and the Heads exchange moves are already valid up to welded isotopy.

\subsection{Artin representation}\label{sectionrpz}
Originally, the Artin representation was defined in \cite{ArtinBraid} within the framework of classical braids, up to isotopy. Subsequently, it has been extended and extensively studied in the contexts of link-homotopy and/or welded braids. Below, we focus on its welded version, up to link-homotopy.

Recall the Artin braid generators $\sigma_i$ for $i\in\{1,\ \ldots,\ n-1\}$ illustrated in Figure \ref{artingene} and the virtual braid generators $\rho_i$ for $i\in\{1,\ \ldots,\ n-1\}$, illustrated in Figure \ref{puregene}.
\begin{figure}[!htbp]
    \begin{minipage}[c]{.46\linewidth}
        \centering
    \includegraphics[page=1]{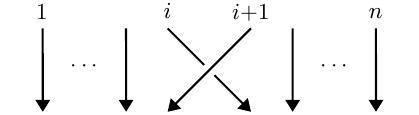}
        \caption{The Artin generator $\sigma_i$.}
        \label{artingene}
    \end{minipage}
    \hfill%
    \begin{minipage}[c]{.46\linewidth}
        \centering
    \includegraphics[page=2]{weldedbraidgene.pdf}
        \caption{The virtual braid generator $\rho_i$.}
        \label{puregene}
    \end{minipage}
\end{figure}

We also need the \emph{reduced free group} denoted $\mathcal{R}F_n$, given by $n$ generators $x_1$, $x_2$, $\ldots$, $x_n$, subject to the relations $[\omega x_i\omega^{-1},x_i]=1$ for any $1\leq i\leq n$ and any word $\omega$ in $x_1,\,\ldots, x_n$.

Let us now define the \emph{homotopy welded Artin representation}.
\begin{defn}\label{defweldedartinrpz}
We call \emph{homotopy welded Artin representation} the homomorphism denoted by $\phi :hWB_n\rightarrow Aut(\mathcal{R}F_n)$. It is defined for $i\in\{1,\,\ldots,\ n-1\}$ as follows:
 \[ \phi(\rho_i): \left \{ \begin{array}{llll}
    x_i&\mapsto&x_{i+1},&\\
    x_{i+1} &\mapsto&x_i,&\\
    x_k&\mapsto&x_k&\mbox{if }k\notin\{i,\ i+1\},
 \end{array}
 \right.\]
 and
\[ \phi(\sigma_i): \left \{ \begin{array}{llll}
    x_i&\mapsto&x_{i+1},&\\
    x_{i+1} &\mapsto&x_{i+1}^{-1}x_ix_{i+1},&\\
    x_k&\mapsto&x_k&\mbox{if }k\notin\{i,\ i+1\}.
 \end{array}
 \right.\]
\end{defn}
\begin{ex}\label{excalcartinpure}
For any $1\leq i\neq j\leq n$, consider the pure homotopy welded braid generators: %\[\chi_{ij}=\left\{\begin{array}{ll}    \rho_{i}\cdots\rho_{j-2}\sigma_{j-1}\rho_{j-1}\cdots\rho_{i}&\mbox{if }i<j,\\    \rho_{i-1}\cdots\rho_{j}\sigma_{j}\rho_{j+1}\cdots\rho_{i-1}&\mbox{if }j<i. \end{array}\right.\]  
 \begin{figure}[!htbp]
    \begin{minipage}[c]{.46\linewidth}
        \centering
    \includegraphics[page=1]{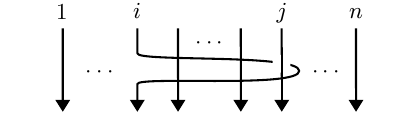}

    $\chi_{ij}=\rho_{i}\cdots\rho_{j-2}\sigma_{j-1}\rho_{j-1}\cdots\rho_{i}$ if $i<j$,
    \end{minipage}
    \hfill%
    \begin{minipage}[c]{.46\linewidth}
        \centering
    \includegraphics[page=2]{torsionchi_ij.pdf}

    $\chi_{ij}=\rho_{i-1}\cdots\rho_{j}\sigma_{j}\rho_{j+1}\cdots\rho_{i-1}$ if $j<i$.
    \end{minipage}
\end{figure}

A direct computation gives \[\Phi(\chi_{ij})(x_k)=\left\{\begin{array}{ll}
    x_k&\mbox{if }k\neq i,\\
    x_j^{-1}x_ix_j&\mbox{if }k=i.
 \end{array}\right.\]
This defines the \lq pure\rq\ part of the representation, which already appears in \cite{ABMWhomotopyribbontube}. There, the authors consider two equivalent definitions of the representation: a geometric one, in terms of ribbon tubes, and a combinatorial one using Gauss diagrams.
\end{ex}
    
\begin{prop}\label{propartininj}
The homotopy welded Artin representation $\phi$ is injective.
\end{prop}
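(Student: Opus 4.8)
The plan is to prove injectivity of $\phi : hWB_n \to \mathrm{Aut}(\mathcal{R}F_n)$ by a standard normal-form argument for the source group, combined with the fact that $\phi$ restricted to the pure subgroup $hWP_n$ is already understood. First I would record the structure of $hWB_n$: there is a surjection $hWB_n \to \mathfrak{S}_n$ sending $\sigma_i$ and $\rho_i$ to the transposition $(i, i{+}1)$, with kernel $hWP_n$, and $\phi$ is compatible with this in the sense that the composite $hWB_n \xrightarrow{\phi} \mathrm{Aut}(\mathcal{R}F_n) \to \mathfrak{S}_n$ (permutation of generators mod the commutator-type relations, i.e. the action on $H_1 = \mathbb{Z}^n$) recovers the same permutation. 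Hence if $\phi(\beta) = \mathrm{id}$ then $\pi(\beta) = \mathrm{id}$, so $\beta \in hWP_n$, and it suffices to prove that $\phi|_{hWP_n}$ is injective.

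The second and main step is to nail down $hWP_n$ as a group and identify $\phi|_{hWP_n}$ with a known faithful representation. Using the arrow calculus recalled above (Lemmas \ref{lemarrowmoves}, \ref{lemarrowrepeat}, \ref{lemexchange}), every element of $hWP_n$ is represented by surgery along a union of \emph{non-repeated} $w$-trees on the trivial braid; the head lies on some component $k$ and the tails on distinct other components, and the head/tail exchange and heads exchange moves let one put such a family into a normal form. This should match exactly the ``pure part'' computed in Example \ref{excalcartinpure}: the generators $\chi_{ij}$ (head on strand $i$, tail on strand $j$) act by $x_i \mapsto x_j^{-1} x_i x_j$, fixing the other $x_k$. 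The cleanest route is to invoke the identification of $hWP_n$ with the reduced McCool / reduced basis-conjugating automorphism group, together with Habegger--Lin's theorem (via \cite{HabeggerLinHomotopy}, or \cite{ABMWhomotopyribbontube} in the welded setting) that the homotopy Artin action of pure (welded) braids on $\mathcal{R}F_n$ is faithful; then $\phi|_{hWP_n}$ is injective by construction. If one instead wants a self-contained argument, one proceeds by induction on $n$: given $\beta \in hWP_n$ with $\phi(\beta) = \mathrm{id}$, look at $\phi(\beta)(x_n) = w^{-1} x_n w = x_n$, use the meridian/longitude structure of $\mathcal{R}F_n$ to conclude the ``longitude'' $w$ is trivial in the quotient where $x_n$ is killed, peel off the $w$-trees with an endpoint on strand $n$, and apply the inductive hypothesis to the braid on strands $1, \dots, n{-}1$ obtained by forgetting strand $n$ (which is compatible with the obvious projection $\mathcal{R}F_n \to \mathcal{R}F_{n-1}$).

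The hard part will be the bookkeeping in step two: making precise that an arbitrary $w$-tree family on the trivial braid, after homotopy arrow calculus, is equivalent to a product of the generators $\chi_{ij}$ whose images under $\phi$ are the basis conjugations, and that the ``reduced'' relations in $\mathcal{R}F_n$ are \emph{exactly} the relations holding in $hWP_n$ — not fewer. This is where one genuinely needs either the Habegger--Lin faithfulness input or an honest inductive peeling argument; everything else (the reduction to the pure case, the commutativity with the projections $\mathcal{R}F_n \to \mathcal{R}F_{n-1}$) is routine. I would therefore structure the proof as: (i) reduce to $hWP_n$ via the permutation quotient; (ii) cite/establish that $\phi|_{hWP_n}$ is the faithful homotopy Artin action on the pure subgroup, using Example \ref{excalcartinpure} to identify the generators' images and \cite{HabeggerLinHomotopy, ABMWhomotopyribbontube} for faithfulness; (iii) conclude. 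I expect the write-up to be short precisely because it leans on these cited faithfulness results rather than reproving them.
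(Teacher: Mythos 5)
Your proposal is correct and follows essentially the same route as the paper: reduce to the pure subgroup by observing that $\phi$ determines the underlying permutation (the paper notes $\phi(\beta)$ sends $x_i$ to a conjugate of $x_{\pi^{-1}(\beta)(i)}$), then cite the faithfulness of $\phi$ restricted to $hWP_n$ from \cite[Thm.~2.34]{ABMWhomotopyribbontube}. The additional normal-form and inductive-peeling material you sketch is not needed; the paper's proof is exactly your steps (i)--(iii) with the cited faithfulness result.
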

\begin{proof}
Let us first note that for any $\beta\in hWB_n$ the homomorphism $\phi(\beta)$ sends a generator $x_i$ to a conjugates of $x_{\pi^{-1}(\beta)(i)}$. In particular, if $\beta\in\ker(\phi)$ then $\beta$ is a pure braid. Therefore, we conclude with \cite[Thm. 2.34]{ABMWhomotopyribbontube}, which states that the representation $\phi$ restricted to $hWP_n$ is injective.
\end{proof}

\begin{lem}\label{lemsondagewelded}
Let $F$ be a $w$-tree for the trivial braid $\mathbf{1}\in hWB_n$ not having its head on the $k$-th component. The action of $\phi(\mathbf1_F)$ on $x_k \in \mathcal{R}F_n$, is given by: $$\phi(\mathbf1_{F})(x_k)=x_k.$$
\end{lem}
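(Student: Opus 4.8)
The plan is to compute the action of the surgery along $F$ on the meridian $x_k$ by first reducing $F$ to a union of $w$-arrows and then keeping track of which components carry their heads.

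First I would note that $\mathbf1_F$ is a \emph{pure} welded braid: surgery along $w$-trees is a local modification, supported in the interior of the square, that returns each strand to its prescribed endpoints, so it does not change the underlying permutation, which is trivial for $\mathbf1$. In particular, by the proof of Proposition~\ref{propartininj}, $\phi(\mathbf1_F)$ sends $x_k$ to a conjugate of $x_k$, so the only thing to prove is that this conjugating word can be taken to be trivial. Next, applying the expanding rule of Figure~\ref{expandingrule} at every trivalent vertex of $F$, and invoking Lemma~\ref{lemarrowmoves} (which ensures the surgered diagram does not change up to welded link-homotopy), replaces $F$ by a union $G=A_1\cup\dots\cup A_m$ of $w$-arrows. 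The crucial observation is that the expanding rule only duplicates the edge leading to the head of a $w$-tree and reroutes the copies along the same strand; hence every $A_i$ has its head on a component that carried a head of $F$, and in particular no $A_i$ has its head on the $k$-th component.

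The heart of the argument is then to reorganize surgery along the union $G$ into a \emph{product} of surgeries along single $w$-arrows. Using the tails/heads/head–tail exchange moves of Lemma~\ref{lemexchange}, one can push the arrows of $G$ to pairwise distinct heights (so that their surgeries stack into a product) at the cost of creating higher-degree $w$-trees $\widetilde T$; inspecting the moves, each such $\widetilde T$ has its head among the heads being exchanged, hence still off the $k$-th component. Expanding these new trees back into arrows and iterating, the process terminates, since as soon as a $w$-tree has more endpoints than there are components it is repeated, so its surgery is trivial by Lemma~\ref{lemarrowrepeat} and it may be discarded. Discarding also the repeated $w$-arrows (those whose tail lies on the same component as their head), one obtains $\mathbf1_F=\prod_i \mathbf1_{A_i'}$ in $hWB_n$, where each $A_i'$ is a non-repeated $w$-arrow whose head lies on a component $j_i\neq k$; such a surgery equals $\chi_{j_i l_i}^{\pm1}$ for some $l_i$, the sign and the position of the tail being normalized by arrow isotopy and the Head/Tail reversal move. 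By Example~\ref{excalcartinpure}, each $\phi(\chi_{j_i l_i}^{\pm1})$ fixes $x_k$ — this uses only $j_i\neq k$, and holds even when $l_i=k$ — hence so does the composite $\phi(\mathbf1_F)$, which gives $\phi(\mathbf1_F)(x_k)=x_k$.

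The main obstacle is the reorganization step: making precise that surgery along a union of $w$-arrows can be rewritten, up to welded link-homotopy, as a product of single-arrow surgeries with only correction $w$-trees whose heads avoid the $k$-th component, and that the rewriting terminates. This is exactly where the bookkeeping of the homotopy arrow calculus is needed — the exchange moves together with the triviality of repeated surgeries — whereas identifying one $w$-arrow surgery with a $\chi_{ij}^{\pm1}$ and reading off its action on $\mathcal RF_n$ from Example~\ref{excalcartinpure} is routine.
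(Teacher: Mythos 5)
Your argument is correct and follows essentially the same route as the paper: expand $F$ into $w$-arrows whose heads stay off the $k$-th component, rearrange them into a product, identify each factor with some $\chi_{ij}^{\pm1}$ with $i\neq k$, and conclude from Example \ref{excalcartinpure}. The only difference is that you justify the rearrangement step via the exchange moves of Lemma \ref{lemexchange} together with Lemma \ref{lemarrowrepeat} (checking that the correction trees keep their heads off the $k$-th component), where the paper simply invokes arrow isotopies; your more careful bookkeeping is sound.
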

\begin{proof}
By using the expanding rule of Figure \ref{expandingrule} iteratively, we can break up $F$ into a union of $w$-arrows, none of which has its head on the $k$-th component. Then, using arrow isotopies, we can rearrange such a union into a product. It is therefore enough to prove the result when $F$ is of degree one. In this case, up to arrow isotopy and the Head/Tail reversal move of Lemma \ref{lemexchange}, we can assume that $F$ is simply a horizontal $w$-arrow. Then, $\mathbf1_F$ is either $\chi_{ij}$ or $\chi_{ij}^{-1}$ for some $1 \leq i \neq j \leq n$ with $k \neq i$, and the result follows from Example \ref{excalcartinpure}.
\end{proof}

\section{Proof of Torsion-Freeness}\label{sectiontorsion}

This section contains the main result of our paper: the proof that homotopy braid groups are torsion-free (Theorem \ref{thmtorsionfree}). The proof is decomposed into two steps. In the first step, we show that torsion elements in $hWB_n$ appear as conjugates of specific braids, having only virtual crossings. Then, in a second step, using an algebraic obstruction, we show that such conjugates never correspond to classical braids in $hB_n$.

\subsection{A characterization of torsion}
Let us denote by $\lambda_n\in hWB_n$ the homotopy welded braid, illustrated in Figure \ref{figlambdawelded}, given by
\[\lambda_n=\rho_1\rho_2\cdots\rho_{n-1}.\]

\begin{figure}[!htbp]
    \centering
    \includegraphics{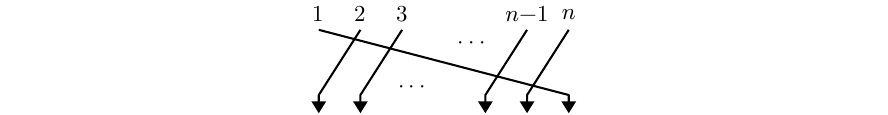}
    \caption{The homotopy welded braid $\lambda_n$.}
    \label{figlambdawelded}
\end{figure}

Notice that $\lambda_n$ is a torsion element of order $n$. We denote by $\tau_n$ the $n$-cycle $(n\ n-1\ \cdots\ 2\ 1)=\pi(\lambda_n)$ associated to $\lambda_n$. When the value of $n$ is clear from the context, it will be omitted in the notation. Let us describe in the following lemma the action of conjugation by $\lambda$ on $w$-trees.

\begin{lem}\label{lemweldedtreeconj}
Let $F$ be a product of $w$-trees for the trivial braid $\mathbf{1} \in hWB_n$, all having their heads on the $k$-th component. Then, the conjugate $\lambda \mathbf{1}_F \lambda^{-1}$ is link-homotopic to $\mathbf{1}_{F'}$, where $F'$ is a product of $w$-trees, all having their heads on the component $\tau^{-1}(k).$
\end{lem}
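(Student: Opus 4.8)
The plan is to conjugate $\mathbf{1}_F$ by $\lambda=\rho_1\cdots\rho_{n-1}$ and push the virtual braid generators across the $w$-trees using the arrow calculus. The key observation is that $\lambda$ consists only of virtual crossings, so conjugating the surgery presentation $\mathbf{1}_F$ by $\lambda$ amounts to placing the virtual braid $\lambda$ at the top of the diagram and $\lambda^{-1}$ at the bottom, then isotoping these virtual crossings through the trivial braid and past the endpoints of the $w$-trees. Since virtual crossings commute freely with $w$-tree edges (by the virtual Reidemeister moves allowed in arrow isotopy, Definition \ref{defarrowiso} and Lemma \ref{lemarrowmoves}), the only real effect is that the endpoints of the $w$-trees get transported from one strand to another according to the permutation $\tau=\pi(\lambda)$. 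Concretely, an endpoint sitting on the $j$-th component of the trivial braid, after sliding through $\lambda$ at the top and $\lambda^{-1}$ at the bottom, ends up on the component $\tau^{-1}(j)$; in particular the heads, all initially on component $k$, move to component $\tau^{-1}(k)$.

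First I would reduce to the case of a single $w$-tree: since $F$ is a product, $\mathbf{1}_F = \prod_i \mathbf{1}_{T_i}$, and conjugation distributes over the product, so $\lambda \mathbf{1}_F \lambda^{-1} = \prod_i (\lambda \mathbf{1}_{T_i} \lambda^{-1})$, which is again a product; hence it suffices to understand how a single $w$-tree $T$ with head on component $k$ transforms. Next I would make precise the ``sliding'' argument: represent $\lambda \mathbf{1}_T \lambda^{-1}$ as the diagram with $\lambda$ stacked on top, the trivial braid carrying $T$ in the middle, and $\lambda^{-1}$ stacked below. Using that the middle braid is trivial, the strands are straight vertical segments, and we may use arrow isotopy together with virtual Reidemeister moves to drag the block of virtual crossings coming from $\lambda$ downward through $T$ — each time a virtual crossing meets an edge or an endpoint of $T$, we invoke the appropriate virtual Reidemeister move or the arrow isotopy moves of Definition \ref{defarrowiso} — until it reaches the bottom and cancels with the corresponding crossing of $\lambda^{-1}$. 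What remains is a $w$-tree $T'$ for the trivial braid whose endpoints have been relabeled by $\tau^{-1}$, so in particular its head lies on component $\tau^{-1}(k)$. Collecting the $T_i'$ gives the desired product $F'$.

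The main obstacle I anticipate is bookkeeping the virtual crossings rigorously: one must check that dragging $\lambda$ through $T$ genuinely only permutes the endpoints and does not introduce twists, extra trivalent vertices, or irremovable virtual crossings on the $w$-tree edges. The clean way to handle this is to note that every virtual crossing created in the process is between a diagram strand and a $w$-tree edge (or between two $w$-tree edges), and all such crossings can be removed by virtual Reidemeister moves that are permitted within arrow isotopy (Definition \ref{defarrowiso}), so they do not affect the link-homotopy class of the surgery result by Lemma \ref{lemarrowmoves}. Care is also needed with orientations at the tails, since the surgery move depends on the orientation of the strand containing a tail; but $\lambda$ preserves orientations (all strands in $\rho_i$ point the same way), so no twist is generated and no Head/Tail reversal is needed. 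Once these points are settled, the conclusion that the heads land on $\tau^{-1}(k)$ is immediate from tracking the permutation.
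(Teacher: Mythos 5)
Your proposal is correct and follows essentially the same route as the paper: reduce to a single $w$-tree since conjugation distributes over the product, slide the tree through $\lambda$ (equivalently, push the virtual crossings of $\lambda$ past the tree) using arrow isotopy and virtual Reidemeister moves so that the endpoints are relabeled by $\tau^{-1}$, then cancel $\lambda$ with $\lambda^{-1}$. The extra bookkeeping you supply about why no twists or irremovable crossings arise is a welcome elaboration of a step the paper leaves implicit, but it is not a different argument.
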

\begin{proof}
Since the conjugate of a product is the product of the conjugates, it is enough to prove the result when $F$ is a single $w$-tree. Consider, therefore, the conjugate $\lambda\mathbf{1}_F\lambda^{-1}$ where $F$ is a single $w$-tree having its head on the $k$-th component. We use an arrow isotopy to slide $F$ through $\lambda$. This operation turns $F$ into a new $w$-tree whose head is on component $\tau^{-1}(k)$. Finally, simplify $\lambda$ with $\lambda^{-1}$ by applying a welded isotopy.
\end{proof}

In the following lemma, we use arrow calculus to describe specific conjugates of homotopy welded braids. Later on Lemma \ref{lemtorsionweldedconj}, we will use this lemma to characterize torsion elements in $hB_n$ up to conjugation.

\begin{lem}\label{lemtorsionweldedniceform}
Let $\beta\in hWB_n$ be a homotopy welded braid, whose associated permutation is an $n$-cycle. Then $\beta$ is conjugate to $\mathbf 1_{F}\cdot\lambda$, where $F$ is a product of $w$-trees all having their heads on the $n$-th component.
\end{lem}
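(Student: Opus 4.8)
The plan is to use the \emph{arrow expansion} of an arbitrary homotopy welded braid to separate it into its permutation part and a ``unipotent'' part made of surgery on $w$-trees, then to conjugate the latter so that all tree heads sit on the last component. First I would recall (from arrow calculus) that any homotopy welded braid $\beta$ can be written as $\beta = \nu \cdot \mathbf{1}_{G}$, where $\nu$ is a product of the virtual generators $\rho_i$ realizing the permutation $\pi(\beta)$, and $G$ is a product of $w$-trees for the trivial braid; indeed multiplying $\beta$ by a virtual braid that undoes $\pi(\beta)$ yields a pure homotopy welded braid, and every pure homotopy welded braid is a surgery presentation $\mathbf 1_G$ on $w$-trees by the structure theory of \cite{JBYasuArrowcalc}. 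Since $\pi(\beta)$ is an $n$-cycle, the purely virtual braid $\nu$ realizes the same permutation as some conjugate of $\lambda_n = \rho_1\cdots\rho_{n-1}$; because $hWB_n$ surjects onto $S_n$ with kernel $hWP_n$, there is a pure welded braid $\mu$ with $\nu = \mu\,\lambda\,\mu'$ for suitable pure pieces — more cleanly, $\nu \lambda^{-1} \in hWP_n$, so $\nu = P\cdot\lambda$ with $P\in hWP_n$, and $P$ itself is $\mathbf 1_H$ for some product of $w$-trees $H$.

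Next I would merge the two tree contributions. Writing $\beta = \mathbf 1_H\cdot\lambda\cdot\mathbf 1_G$, I push $\mathbf 1_G$ to the left across $\lambda$: by Lemma~\ref{lemweldedtreeconj} (applied componentwise to the trees of $G$), $\lambda\,\mathbf 1_G\,\lambda^{-1} = \mathbf 1_{G'}$ for a product of $w$-trees $G'$, so $\beta = \mathbf 1_{H}\cdot\mathbf 1_{G'}\cdot\lambda = \mathbf 1_{H\cup G'}\cdot\lambda$. Thus, up to welded link-homotopy, $\beta = \mathbf 1_{E}\cdot\lambda$ with $E := H\cup G'$ a product of $w$-trees for the trivial braid, with heads on various components. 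It remains only to move all heads onto the $n$-th component \emph{after conjugating}.

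The final step is the conjugation that concentrates the heads. I would conjugate $\mathbf 1_E\cdot\lambda$ by a suitable power of $\lambda$: since $\pi(\lambda)=\tau = (n\ n{-}1\ \cdots\ 1)$ is an $n$-cycle, for a $w$-tree with head on component $k$, conjugating by $\lambda^{m}$ moves its head to component $\tau^{-m}(k)$ (iterating Lemma~\ref{lemweldedtreeconj}), while $\lambda^{m}(\mathbf 1_E\cdot\lambda)\lambda^{-m} = \mathbf 1_{E''}\cdot\lambda$ keeps the same $\lambda$ on the right. However a single power of $\lambda$ cannot fix all heads onto $n$ simultaneously unless all heads already lie on one component; the honest mechanism is to use arrow calculus itself: drag a head that is currently on component $k$ along strand $k$ upward through the bottom of the braid word $\lambda$, around, and reattach it on component $n$, paying for this with lower-degree or repeated $w$-trees by the Heads/Tails exchange moves of Lemma~\ref{lemexchange}; since $hWP_n$ is nilpotent up to link-homotopy (trees of sufficiently high degree or repeated trees vanish by Lemma~\ref{lemarrowrepeat}), this rewriting terminates and produces a product $F$ of $w$-trees all with heads on the $n$-th component, still of the form $\mathbf 1_F\cdot\lambda$ up to conjugacy. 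The main obstacle is precisely this last bookkeeping: organizing the head-migration so that the correction trees generated by the exchange moves are themselves pushed to higher filtration degree and hence eventually disappear, so that the induction on degree closes; once that is set up, each step is a direct application of Lemmas~\ref{lemarrowmoves}, \ref{lemarrowrepeat}, \ref{lemexchange}, and \ref{lemweldedtreeconj}.
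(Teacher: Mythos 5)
Your opening steps are sound and essentially reproduce the paper's base case: since $\pi(\beta)$ is an $n$-cycle it is conjugate in $S_n$ to $\tau$, so after conjugating by a virtual braid you may assume $\pi(\beta)=\tau$, and then $\beta\lambda^{-1}$ is pure, hence of the form $\mathbf 1_E$ for a product of $w$-arrows (the paper does this in one step rather than your two-stage $\nu\cdot\mathbf 1_G$ decomposition, and your claim that $\nu\lambda^{-1}$ is pure needs the preliminary conjugation to $\tau$ that you only gesture at). Up to that point the two arguments agree.

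The genuine gap is in your final step, which is the heart of the lemma. The mechanism you propose --- dragging a head from component $k$ around the braid and ``reattaching it on component $n$'' at the cost of correction trees --- is not an operation that arrow calculus provides: the moves of Lemma \ref{lemexchange} only exchange the relative positions of adjacent endpoints along a strand, and a head never changes the component it sits on. So the induction you hope to close cannot even get started, and you correctly observe that conjugating by a power of $\lambda$ permutes all heads simultaneously and therefore cannot concentrate them either. The paper's actual trick is different: one inducts on $k$, maintaining that $\beta$ is conjugate to $\mathbf 1_{F_k}\cdot\lambda$ with all heads on components $k,\ldots,n$. Using the exchange moves of Lemma \ref{lemexchange} together with the vanishing of repeated trees of degree $>n$ (Lemma \ref{lemarrowrepeat}), one sorts the product as $\mathbf 1_{F_k}=\mathbf 1_{F'_k}\mathbf 1_{F''_k}$ with $F'_k$ the trees headed on component $k$. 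One then conjugates the \emph{whole braid} by $\mathbf 1_{F'_k}$: this cancels $\mathbf 1_{F'_k}$ on the left and reintroduces it on the right of $\mathbf 1_{F''_k}$ as $\lambda\,\mathbf 1_{F'_k}\lambda^{-1}$, which by Lemma \ref{lemweldedtreeconj} has all heads on component $\tau^{-1}(k)=k+1$. Iterating for $k=1,\ldots,n-1$ pushes every head onto the $n$-th component. It is this conjugation by the sub-product of trees headed on component $k$ --- not any head-migration within arrow calculus --- that makes the argument work, and it is the idea missing from your proposal.
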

\begin{proof}
Let us show by induction on $k\in\{1,\ \ldots,\ n\}$ that $\beta$ is conjugate to a braid of the form $\mathbf1_{F_k}\cdot\lambda$, where $F_k$ is a product of $w$-trees having their heads on components numbered from $k$ to $n$.

For the base case, we can assume that $\beta$ is conjugated with $\beta_1$ satisfying $\pi(\beta_1)=\tau=(n\ n-1\ \cdots\ 2\ 1)$. We can then write $$\beta_1=\theta\lambda,$$ where $\theta=\beta_1\lambda^{-1}$ is a pure homotopy welded braid. Additionally, it is clear that a virtualization move can be achieved by surgery on a $w$-arrow (see \cite[Fig. 6]{JBYasuArrowcalc}). This implies that any pure braid can be obtained by $w$-arrow surgeries on the trivial braid. We can thus write
$$\beta_1=\mathbf1_{F_1}\cdot\lambda,$$
where $F_1$ is a product of $w$-arrows.

Now, assume by induction that for some $1\leq k<n$ the braid $\beta$ is conjugate with $$\beta_k=\mathbf1_{F_k}\cdot\lambda,$$
where $F_k$ is a product of $w$-trees with their heads on components numbered from $k$ to $n$.
Note, on one hand, that Lemma \ref{lemexchange} allows us to exchange the relative position of two consecutive factors in a product, up to higher degree $w$-trees. On the other hand, note that by Lemma \ref{lemarrowrepeat}, $w$-trees of degree higher than $n$ are repeated and are therefore trivial up to link-homotopy. Combining these two facts,\footnote{Note that a similar sorting process appears in the proof of \cite[Thm. 9.4]{JBYasuArrowcalc}; see also \cite[Thm. 4.3]{YasuharaAkiraSelfdelteq} for an analogous argument using claspers.} we rearrange $w$-trees degree by degree to obtain the following claim.
\begin{claim}\label{claimarrowdecompo}
The pure homotopy welded braid $\mathbf1_{F_k}$ decomposes as follows:
$$\mathbf1_{F_k}=\mathbf1_{F'_k}\mathbf1_{F''_k},$$ where $F'_k$ is a product of $w$-trees having their heads on the $k$-th component and $F_k''$ is a product of $w$-trees having their heads on components numbered from $k+1$ to $n$.
\end{claim}

We then consider and compute the conjugate,
\begin{align*}
    \beta_{k+1}:=&\,(\mathbf1_{F'_k})^{-1}\beta_k\mathbf1_{F'_k},\\
    =&\,(\mathbf1_{F'_k})^{-1}\mathbf1_{F'_k}\mathbf1_{F''_k}\cdot\lambda\mathbf1_{F'_k}\lambda^{-1}\cdot\lambda,\\
    =&\,\mathbf1_{F''_k}\cdot\lambda\mathbf1_{F'_k}\lambda^{-1}\cdot\lambda.
\end{align*}
Finally, by Lemma \ref{lemweldedtreeconj}, $\lambda\mathbf1_{F'_k}\lambda^{-1}$ is a product of $w$-trees having their heads on the $(k+1)$-th component which concludes the induction as well as the proof.
\end{proof}

Let us now tackle the question of torsion, starting with a preliminary result for pure braids. This proposition is well-known and can be found, for example, in \cite{HabeggerLinHomotopy} and \cite{HumphriesTorsion}.

\begin{prop}\label{prophPntorsionfree}
The pure homotopy braid group $hP_n$ is torsion-free for any positive integer $n$.
\end{prop}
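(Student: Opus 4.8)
The plan is to proceed by induction on $n$ and exploit the well-known surjection $hP_n \twoheadrightarrow hP_{n-1}$ obtained by forgetting the last strand. Concretely, deleting the $n$-th component of a pure homotopy braid diagram gives a group homomorphism $p_n \colon hP_n \to hP_{n-1}$, and one identifies its kernel $K_n$. The key structural input is that $K_n$ is a free \emph{reduced} free group — more precisely, as shown by Habegger and Lin, $K_n$ is isomorphic to the reduced free group $\mathcal{R}F_{n-1}$, which is itself torsion-free (indeed it is known to be a torsion-free nilpotent-by-(torsion-free abelian) group, or one can see torsion-freeness directly from its lower central series structure, since each successive quotient $\mathcal{R}F_{n-1}/(\text{lower central terms})$ is free abelian). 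Granting this, we get a short exact sequence
\[
1 \longrightarrow \mathcal{R}F_{n-1} \longrightarrow hP_n \longrightarrow hP_{n-1} \longrightarrow 1.
\]

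First I would establish the base cases: $hP_1$ is trivial and $hP_2 \cong \mathbb{Z}$ (generated by $\chi_{12}$, or equivalently by the full twist $\sigma_1^2$ up to link-homotopy), both manifestly torsion-free. Then, for the inductive step, suppose $\beta \in hP_n$ is a torsion element, say $\beta^m = 1$ with $m \geq 1$. Applying $p_n$, we find $p_n(\beta)^m = 1$ in $hP_{n-1}$, so by the inductive hypothesis $p_n(\beta) = 1$, i.e.\ $\beta \in K_n \cong \mathcal{R}F_{n-1}$. But $\mathcal{R}F_{n-1}$ is torsion-free, hence $\beta = 1$. This is the entire skeleton.

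The main obstacle is supplying a clean justification of the two ingredients that the inductive step rests on: that strand-deletion really does yield a short exact sequence with the stated kernel, and that $\mathcal{R}F_{n-1}$ is torsion-free. For the first, the surjectivity of $p_n$ is immediate (include the extra trivial strand), and the identification of the kernel with $\mathcal{R}F_{n-1}$ comes from the homotopy Artin representation: a pure homotopy braid that becomes trivial after deleting the last strand is, up to link-homotopy, a braid in which only the $n$-th strand moves, and such braids are classified by how the meridian $x_n$ is conjugated — giving exactly $\mathcal{R}F_{n-1}$ (this is precisely the content of \cite{HabeggerLinHomotopy}, and it is also visible through Example \ref{excalcartinpure} and the Artin representation restricted to these "last-strand-only" braids). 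For the second ingredient, torsion-freeness of $\mathcal{R}F_{n-1}$, the cleanest argument is that $\mathcal{R}F_{n-1}$ is residually (torsion-free nilpotent): its lower central series has successive quotients that are finitely generated free abelian groups (this is the Milnor/Habegger--Lin computation of the lower central series of the reduced free group), so $\mathcal{R}F_{n-1}$ embeds in an inverse limit of torsion-free nilpotent groups and is therefore torsion-free.

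An alternative to citing Habegger--Lin for the short exact sequence is to run the whole argument through the Artin representation $\phi$ directly: $\phi$ restricted to $hP_n$ is injective (Proposition \ref{propartininj}), and if $\beta \in hP_n$ has $\beta^m = 1$ then $\phi(\beta)^m = \mathrm{id}$; one then argues that a finite-order automorphism in the image of $\phi|_{hP_n}$ — which fixes each $x_i$ up to conjugation — must be the identity, by inducting on $n$ via the "forget $x_n$" quotient $\mathrm{Aut}^{c}(\mathcal{R}F_n) \to \mathrm{Aut}^{c}(\mathcal{R}F_{n-1})$ whose kernel consists of automorphisms fixing $x_1,\dots,x_{n-1}$ and conjugating $x_n$, i.e.\ a copy of $\mathcal{R}F_{n-1}$ again. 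Either route reduces, in the end, to the torsion-freeness of the reduced free group, so I would state that as the one external fact being imported and keep the inductive exact-sequence argument as the main line of proof.
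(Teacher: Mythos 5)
The paper offers no proof of this proposition at all: it is stated as well known and simply attributed to \cite{HabeggerLinHomotopy} and \cite{HumphriesTorsion}. Your argument --- induction on $n$ via the strand-forgetting short exact sequence $1 \to \mathcal{R}F_{n-1} \to hP_n \to hP_{n-1} \to 1$, reducing everything to the torsion-freeness of the nilpotent group $\mathcal{R}F_{n-1}$, whose lower central series quotients are free abelian --- is correct and is precisely the standard proof contained in those references, with the identification of the kernel with $\mathcal{R}F_{n-1}$ being the one genuinely nontrivial imported fact, which you correctly attribute to Habegger--Lin.
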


With the following lemma, we reduce our study to prime numbers of components.

\begin{lem}\label{lemtorsionorder}
If there is torsion in $hB_n$, then for some prime number $p \leq n$, there exists a torsion element of order $p$ in $hB_p$. 
\end{lem}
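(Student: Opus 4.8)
The plan is to reduce the existence of torsion in $hB_n$ to torsion of prime order in a group on fewer strands, exploiting the structure of the finite cyclic subgroups that torsion elements generate together with the forgetful maps on strands. First I would let $\beta \in hB_n$ be a nontrivial torsion element, so $\beta$ has some finite order $m > 1$; picking a prime $p \mid m$ and replacing $\beta$ by $\beta^{m/p}$, we may assume $\beta$ has order exactly $p$. The associated permutation $\pi(\beta) \in \mathfrak{S}_n$ then has order dividing $p$, hence is either trivial or a product of $p$-cycles (on disjoint supports of size $p$), since $p$ is prime. If $\pi(\beta)$ is trivial, then $\beta \in hP_n$, contradicting Proposition \ref{prophPntorsionfree}; so $\pi(\beta)$ has at least one $p$-cycle, and in particular $p \leq n$.

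The heart of the argument is then to cut down to exactly $p$ strands. Let $A \subseteq \{1,\ldots,n\}$ be the support of one $p$-cycle appearing in $\pi(\beta)$. I would use the fact that deleting strands induces a group homomorphism $\delta_A : hB_n \to hB_p$ (identifying $hB_{|A|}$ with the homotopy braid group on the strands indexed by $A$): geometrically, erasing a strand of a braid diagram is compatible with link-homotopy since each component is allowed to self-intersect and the remaining components are unaffected, so this map is well-defined. Applying $\delta_A$ to $\beta$ yields an element $\bar\beta := \delta_A(\beta) \in hB_p$ with $\bar\beta^{\,p} = 1$, and whose associated permutation is precisely the $p$-cycle obtained by restricting $\pi(\beta)$ to $A$ — in particular $\pi(\bar\beta)$ is a nontrivial $p$-cycle, so $\bar\beta \neq 1$. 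Hence $\bar\beta$ is a torsion element of order $p$ in $hB_p$, which is exactly the conclusion.

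The step I expect to be the main obstacle is establishing that strand deletion is genuinely well-defined as a homomorphism $hB_n \to hB_p$ on homotopy classes, with the correct behaviour on permutations; while this is standard in spirit (the analogous fact for ordinary braid groups and for Milnor's link-homotopy theory is classical), one must check that each of the local moves generating welded link-homotopy — here restricted to classical braids, so the classical Reidemeister moves and self-crossing changes — is preserved under erasing a strand, which is immediate move-by-move, and that the map respects the stacking product, which is clear from the definition of $\delta_A$ strand-wise. A small additional point to verify is that $\delta_A(\beta^{m/p})$ has order exactly $p$ and not $1$: this is guaranteed by the permutation computation, since $\pi \circ \delta_A = (\text{restriction to } A) \circ \pi$ and the restriction of $\pi(\beta^{m/p})$ to $A$ is a nontrivial $p$-cycle. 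Once these bookkeeping points are in place, the lemma follows immediately; no delicate arrow-calculus input is needed here, the work being entirely elementary group theory plus the functoriality of strand deletion.
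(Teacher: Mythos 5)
Your proof is correct and follows essentially the same route as the paper: reduce to prime order $p$, use torsion-freeness of $hP_n$ to force a nontrivial permutation that is a product of $p$-cycles, and delete all strands outside the support of one such cycle. The only point to tighten is that strand deletion is a well-defined homomorphism only on the subgroup of braids whose associated permutation preserves the set $A$ (which is exactly how the paper phrases it), not on all of $hB_n$ --- but since you apply it only to powers of $\beta$, which lie in that subgroup, your argument goes through.
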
%\label{lemtorsionorder}
\begin{proof}
Let $\beta \in hB_n$ be a torsion element of prime order $p$, and let $\pi(\beta)$ be its associated permutation. By Proposition \ref{prophPntorsionfree}, the subgroup $hP_n$ is torsion-free. Consequently, $\pi(\beta) \neq 1$, implying that $\pi(\beta)$ is a torsion element of order $p$ in the symmetric group $S_n$. Specifically, $\pi(\beta)$ can be expressed as a product of disjoint $p$-cycles, where $p \leq n$. Let us denote one such $p$-cycle as $(i_1, \ldots, i_p)$. Now, consider the subgroup $G$ of $hB_n$ generated by elements whose associated permutation sends the set $\{i_1, \ldots, i_p\}$ to itself. Next, define the homomorphism from $G$ to $hB_p$, which retains only the components $\{i_1, \ldots, i_p\}$. This homomorphism sends $\beta$ to a torsion element of order $p$ in $hB_p$, thereby completing the proof.
\end{proof}

Recall from Definition \ref{defweldedbraid} that a homotopy welded braid is \emph{classical} if it belongs to the subgroup $hB_n$ of $hWB_n$ generated by the Artin generators $\sigma_i$ (see Proposition \ref{propbraidembedinwelded}). Note also that the welded braid $\lambda\in hWB_n$ is a torsion element of order $n$. In the following lemma, we give a characterization of torsion elements in $hB_n$ using this braid $\lambda$ as well as the notion of classical braid.
\begin{lem}\label{lemtorsionweldedconj}
There is torsion in $hB_n$ if and only if for some prime number $p\leq n$ the braid $\lambda\in hWB_p$ given by $\lambda=\rho_1\rho_2\cdots\rho_{p-1}$ is conjugate to a classical braid.
\end{lem}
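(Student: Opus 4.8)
The plan is to prove the two implications separately; the ``only if'' direction is the substantive one. Suppose first that $hB_n$ contains torsion. By Lemma \ref{lemtorsionorder} there is a prime $p\le n$ and a torsion element $\beta\in hB_p$ of order exactly $p$. Since $hP_p$ is torsion-free (Proposition \ref{prophPntorsionfree}) we have $\pi(\beta)\ne 1$, and as an order-$p$ element of $S_p$ with $p$ prime, $\pi(\beta)$ must be a $p$-cycle. Viewing $\beta$ inside $hWB_p$ through Proposition \ref{propbraidembedinwelded}, Lemma \ref{lemtorsionweldedniceform} exhibits $\beta$ as a conjugate of $\mathbf 1_F\cdot\lambda$, where $F$ is a product of $w$-trees all having their heads on the $p$-th component. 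It then suffices to prove $\mathbf 1_F\cdot\lambda=\lambda$ in $hWB_p$, for then $\lambda$ is conjugate to the classical braid $\beta$.

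To prove $\mathbf 1_F\lambda=\lambda$, I would compare both sides under the Artin representation $\phi$, which is injective (Proposition \ref{propartininj}). A direct computation gives $\phi(\lambda)\colon x_i\mapsto x_{i+1}$, indices read modulo $p$. Since $\mathbf 1_F$ is pure, $\phi(\mathbf 1_F)$ sends $x_p$ to some conjugate $w x_p w^{-1}$ and, by Lemma \ref{lemsondagewelded}, fixes $x_1,\dots,x_{p-1}$. Hence $\psi:=\phi(\mathbf 1_F\lambda)=\phi(\mathbf 1_F)\circ\phi(\lambda)$ satisfies $\psi(x_i)=x_{i+1}$ for $i\le p-2$, $\psi(x_{p-1})=w x_p w^{-1}$, and $\psi(x_p)=x_1$. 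Now $(\mathbf 1_F\lambda)^p=\beta^p=\mathbf 1$ because conjugation preserves order, so $\psi^p=\mathrm{id}$; chasing $x_p$ through $\psi$ gives $\psi^k(x_p)=x_k$ for $1\le k\le p-1$ and then $\psi^p(x_p)=\psi(x_{p-1})=w x_p w^{-1}$, so $w x_p w^{-1}=x_p$ in $\mathcal{R}F_p$. Therefore $\psi(x_{p-1})=x_p$, so $\psi$ and $\phi(\lambda)$ agree on every generator, and injectivity of $\phi$ gives $\mathbf 1_F\lambda=\lambda$.

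For the converse, suppose that for some prime $p\le n$ the element $\lambda\in hWB_p$ is conjugate to a classical braid $\gamma\in hB_p$. Since $\lambda$ has order $p$, so has $\gamma$ in $hWB_p$, hence in $hB_p$ by Proposition \ref{propbraidembedinwelded}; in particular $\gamma\ne 1$ is torsion, and composing with the inclusion $hB_p\hookrightarrow hB_n$ (adjoin $n-p$ trivial strands; it is injective because forgetting those strands is a retraction) produces torsion in $hB_n$. The step I expect to be the main obstacle is the middle one: realizing that the $w$-tree factor $\mathbf 1_F$ provided by Lemma \ref{lemtorsionweldedniceform} is forced to be trivial once $\beta$ has finite order, and that the cleanest way to see this is to push the finite-order hypothesis through the Artin representation and follow one generator around its $p$-cycle; the rest is assembling results already at hand.
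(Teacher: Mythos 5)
Your proposal is correct and follows essentially the same route as the paper: reduce to a prime $p$ via Lemma \ref{lemtorsionorder}, put $\beta$ in the form $\mathbf 1_F\cdot\lambda$ via Lemma \ref{lemtorsionweldedniceform}, and then use the order-$p$ condition together with Lemma \ref{lemsondagewelded} and the injectivity of $\phi$ to force $\mathbf 1_F=\mathbf 1$. The only (cosmetic) difference is that where you compute $\psi$ explicitly on generators and chase $x_p$ around the cycle, the paper instead telescopes $\phi\big((\mathbf 1_F\lambda)^p\big)(x_p)$ using Lemma \ref{lemweldedtreeconj} applied to the conjugates $\lambda^k\mathbf 1_F\lambda^{-k}$ — the two computations are identical in content.
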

\begin{proof}
According to Lemma \ref{lemtorsionorder}, if there is torsion in $hB_n$, we can find a torsion element $\beta$ of order $p$ in $hB_p$ for some prime number $p$. Consequently, $\pi(\beta)^p=\mathrm{Id}$, and by Proposition \ref{prophPntorsionfree}, $\pi(\beta)\neq\mathrm{Id}$. Thus, $\pi(\beta)$ is a torsion element of order $p$ and therefore it is a $p$-cycle. Now, by Proposition \ref{propbraidembedinwelded}, we can consider the classical braid $\beta$ as an element of $hWB_p$. Therefore, by Lemma \ref{lemtorsionweldedniceform}, $\beta$ is conjugate to $\mathbf 1_{F}\cdot\lambda$, where $F$ is a product of $w$-trees, all having their heads on the $p$-th component. Moreover, by Lemma \ref{lemweldedtreeconj}, for any integer $k\in\{1,\ \ldots,\ p-1\}$, the conjugates $\lambda^k\mathbf 1_{F}\lambda^{-k}$ are products of $w$-trees, none of which have their heads on the $p$-th component. Hence, by Lemma \ref{lemsondagewelded},$$\phi\left(\lambda^k\mathbf 1_{F}\lambda^{-k}\right)(x_p)=x_p$$ for any $k\in\{1,\ \ldots,\ p-1\}$. In particular:
\begin{align*}
    \phi\big((\mathbf 1_{F}  \lambda)^p\big)(x_p)&=\phi\Big(\mathbf 1_{F}\big(\lambda\mathbf 1_{F}\lambda^{-1}\big)\big(\lambda^2\mathbf 1_{F}\lambda^{-2}\big)\cdots\big(\lambda^{p-1}\mathbf 1_{F}\lambda^{1-p}\big)\lambda^p\Big)(x_p),\\
    &=\phi(\mathbf 1_{F})\circ\phi\big(\lambda\mathbf 1_{F}\lambda^{-1}\big)\circ\phi\big(\lambda^2\mathbf 1_{F}\lambda^{-2}\big)\circ\cdots\circ\phi\big(\lambda^{p-1}\mathbf 1_{F}\lambda^{1-p}\big)(x_p),\\
    &=\phi(\mathbf 1_{F})(x_p).
\end{align*}
On the other hand, since $\beta$ is a torsion element, $\beta^p=(\mathbf 1_{F}\lambda)^p=\mathbf1$, which implies that $$\phi\big((\mathbf 1_{F}\lambda)^p\big)(x_p)=\phi(\mathbf 1)(x_p)=x_p.$$
By combining the two previous equalities, we deduce that $\phi(\mathbf 1_{F})(x_p)=x_p$. Moreover, by Lemma \ref{lemsondagewelded} again, we also have that $\phi(\mathbf 1_{F})(x_k)=x_k$ for any $k<p$. Thus, $\phi(\mathbf 1_{F})=\mathrm{Id}$ and by injectivity of $\phi$ (Proposition \ref{propartininj}), the braid $\mathbf 1_{F}$ is trivial. Therefore, the classical braid $\beta$ is conjugate to $\lambda$, thus showing the \lq only if\rq\, part of the statement.

To show the converse implication, we use the fact that any conjugate of $\lambda$ is a torsion element of order $p$ in $hWB_p$ and that, consequently, a braid given by the same expression in $hB_n$ is also a torsion element.
\end{proof}

\subsection{An algebraic obstruction}
By Lemma \ref{lemtorsionweldedconj}, the final part of the proof of our main result consists in showing that for any integer $n$, the braid $\lambda$ has no classical braid as conjugate. To achieve this, we need to determine whether a welded braid is also a classical one. This is done in the following lemma, drawing inspiration from \cite[Thm. 1.7]{HabeggerLinHomotopy}.
\begin{lem}\label{lemclassicalcara}
Let $\beta\in hWB_n$ be a homotopy welded braid. If $\beta$ is a classical braid then $$\phi(\beta)(x_1x_2\cdots x_n)=x_1x_2\cdots x_n.$$
\end{lem}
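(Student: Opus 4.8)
The plan is to show that the product $x_1 x_2 \cdots x_n \in \mathcal{R}F_n$ is fixed by $\phi(\beta)$ whenever $\beta$ is classical, by reducing to the Artin generators $\sigma_i$ and computing. The key observation is that $\phi(\sigma_i)$ fixes the total product: indeed, under $\phi(\sigma_i)$ the consecutive subword $x_i x_{i+1}$ maps to $x_{i+1}\cdot (x_{i+1}^{-1} x_i x_{i+1}) = x_i x_{i+1}$, while every other letter is fixed; hence $\phi(\sigma_i)(x_1\cdots x_n) = x_1\cdots x_n$. The same computation works for $\phi(\sigma_i^{-1})$. Since $hB_n$ is, by definition (Proposition \ref{propbraidembedinwelded}), the subgroup of $hWB_n$ generated by the $\sigma_i$, and $\phi$ is a homomorphism into $\mathrm{Aut}(\mathcal{R}F_n)$, the set of automorphisms fixing $x_1\cdots x_n$ is a subgroup, so it contains $\phi(\beta)$ for every classical $\beta$. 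This yields the claim.

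More explicitly, I would first record the sublemma that $\phi(g)(x_1\cdots x_n) = x_1\cdots x_n$ for $g \in \{\sigma_i, \sigma_i^{-1}\}$, doing the one-line word computation above (and noting it is really just the statement that the $\sigma_i$ act by "conjugating a letter by the next one and swapping," which visibly preserves the ordered product — this is the braid-theoretic avatar of the boundary word of the punctured disc being preserved). Then, writing $\beta = g_1 \cdots g_m$ as a word in the $\sigma_i^{\pm 1}$, I would apply $\phi$ and peel off the generators one at a time: $\phi(\beta)(x_1\cdots x_n) = \phi(g_1)\cdots\phi(g_m)(x_1\cdots x_n) = \phi(g_1)\cdots\phi(g_{m-1})(x_1\cdots x_n) = \cdots = x_1\cdots x_n$, using the sublemma at each step. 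No appeal to injectivity of $\phi$ or to arrow calculus is needed here.

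There is essentially no serious obstacle; the only thing to be careful about is that the relations defining $\mathcal{R}F_n$ are respected throughout — but this is automatic since $\phi$ is given as landing in $\mathrm{Aut}(\mathcal{R}F_n)$, so all computations take place in the quotient group and the cancellation $x_{i+1} x_{i+1}^{-1} = 1$ is valid. One might also remark, for context, that the converse is false and that this invariance of $x_1\cdots x_n$ is exactly the algebraic shadow of classicality that will be exploited in the next step: since $\phi(\rho_i)$ merely transposes $x_i \leftrightarrow x_{i+1}$, one computes $\phi(\lambda)(x_1\cdots x_n)$ and finds it differs from $x_1\cdots x_n$, and the same discrepancy will survive conjugation and the addition of the $w$-tree factor $\mathbf{1}_F$, contradicting Lemma \ref{lemtorsionweldedconj}. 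But that argument belongs to the subsequent lemma; for the statement at hand, the generator-by-generator computation suffices.
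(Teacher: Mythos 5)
Your proof is correct and is exactly the ``simple verification on the classical generators $\sigma_i$'' that the paper's one-line proof refers to: the computation $\phi(\sigma_i)(x_i x_{i+1}) = x_{i+1}\cdot x_{i+1}^{-1}x_i x_{i+1} = x_i x_{i+1}$, extended to all of $hB_n$ because the automorphisms fixing $x_1\cdots x_n$ form a subgroup containing the $\phi(\sigma_i^{\pm1})$. No gaps; you have simply written out the details the paper omits.
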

\begin{proof}
This follows from a simple verification on the classical generators $\sigma_i$.
\end{proof}

Recall now the \emph{reduced Magnus expansion}. This is the homomorphism \[M: \mathcal RF_n\to \mathcal A_n\] from the reduced free group into the polynomial algebra $\mathcal A_n$ in non-commuting variables $X_1,\ \ldots,\ X_n$ in which monomials $X_{\alpha_1}X_{\alpha_2}\cdots X_{\alpha_k}$ vanish if $\alpha_i=\alpha_j$ for some $i\neq j$. The image of a generator $x_i$ is defined by the polynomial $M(x_i):=1+X_i$. 

In the following lemma, we describe elements not belonging to the fixed points of $\phi(\lambda)$.\begin{lem}\label{lemnofixptswelded}
For any $\beta\in hWB_n$, $\phi(\lambda)\circ\phi(\beta)(x_1x_2\cdots x_n)\neq\phi(\beta)(x_1x_2\cdots x_n)$.
\end{lem}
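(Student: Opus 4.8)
The plan is to reduce the inequality to a concrete computation using the reduced Magnus expansion $M$. Since $\phi(\beta)$ sends each generator $x_i$ to a conjugate of $x_{\pi^{-1}(\beta)(i)}$, the element $w:=\phi(\beta)(x_1x_2\cdots x_n)$ is a product of conjugates of all the generators (each appearing exactly once up to conjugation), and therefore $M(w)$ has the form $1 + (X_1+X_2+\cdots+X_n) + (\text{higher-degree terms})$; crucially, the degree-one part of $M(w)$ is $X_1+\cdots+X_n$ regardless of $\beta$. Now I would compute the degree-one part of $M(\phi(\lambda)(w))$ and show it differs from $X_1+\cdots+X_n$, which immediately forces $\phi(\lambda)(w)\neq w$.

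The key point is that $\phi(\lambda)=\phi(\rho_1)\circ\cdots\circ\phi(\rho_{n-1})$ acts on generators by the permutation $\tau=(n\ n-1\ \cdots\ 2\ 1)$, i.e. $\phi(\lambda)(x_i)=x_{\tau(i)}$, so $\phi(\lambda)$ induces on the degree-one part of $\mathcal A_n$ the linear map sending $X_i\mapsto X_{\tau(i)}$. Hence the degree-one part of $M(\phi(\lambda)(w))$ is $\sum_i X_{\tau(i)}=X_1+\cdots+X_n$ as well — so this naive linear argument is not enough, and I must look one step further. The honest route is to use the extra information coming from Lemma \ref{lemclassicalcara}-type reasoning combined with Lemma \ref{lemnofixptswelded}'s own hypotheses: actually, the cleanest argument is to observe that $\phi(\lambda)$ does \emph{not} fix $x_1x_2\cdots x_n$ itself — indeed $\phi(\lambda)(x_1x_2\cdots x_n)=x_n x_1 x_2\cdots x_{n-1}$ (after applying $\tau$), and these two elements are distinct in $\mathcal RF_n$, as one checks via $M$ by comparing, say, the coefficient of $X_nX_1$ (which is $1$ in $M(x_nx_1\cdots x_{n-1})$ but $0$ in $M(x_1\cdots x_n)$). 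More generally I claim $\phi(\lambda)$ has \emph{no} fixed point of the form $\phi(\beta)(x_1\cdots x_n)$: writing $w=\phi(\beta)(x_1\cdots x_n)$, the total-degree-two part of $M(w)$ records, via its coefficients, a cyclic-order datum on $\{1,\dots,n\}$ (coming from the order in which the conjugated generators are multiplied), and applying $\phi(\lambda)$ rotates that datum by $\tau$; since $\tau$ is an $n$-cycle with $n\geq 2$ it has no fixed cyclic order, giving the strict inequality.

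Concretely, the step I would carry out is: expand $M(w)=1+\sum_i X_i + \sum_{i\neq j} c_{ij} X_iX_j + \cdots$, show that the pattern of which $c_{ij}$ are nonzero (more precisely, a suitable combination such as $\sum_j c_{ij}$ or the "immediate successor" relation extracted from the degree-two part) is a complete cyclic word on the $n$ letters, then note that $M\circ\phi(\lambda) = (\text{relabel }X_i\mapsto X_{\tau(i)})\circ M$ on each graded piece, so $M(\phi(\lambda)(w))$ has the cyclically-rotated pattern; finally invoke that an $n$-cycle acting on cyclic words of length $n$ has no fixed point (for $n\geq 2$) to conclude $M(\phi(\lambda)(w))\neq M(w)$, hence $\phi(\lambda)(w)\neq w$.

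The main obstacle I anticipate is making precise and verifying the claim that the degree-two part of the reduced Magnus expansion of $\phi(\beta)(x_1\cdots x_n)$ genuinely encodes the full cyclic order of the factors — one must check that the conjugating words do not destroy this information, which works because $M$ of a conjugate $M(\omega x_k\omega^{-1}) = 1 + X_k + (\text{terms involving }X_k\text{ and variables from }\omega)$ and the reduced relations kill the "repeated-variable" terms, so that in the product $\prod_k \phi(\beta)(x_k)$ the surviving degree-two monomials $X_iX_j$ with $i\neq j$ faithfully track adjacency in the product order; I would need to isolate the right linear functional of the $c_{ij}$ that is both computable and manifestly $\tau$-equivariant and non-fixed. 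Once that bookkeeping is pinned down, the conclusion is immediate from the order of $\tau$.
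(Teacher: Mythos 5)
Your setup is the right one---reducing the inequality to the reduced Magnus expansion and using that $M\circ\phi(\lambda)$ equals the cyclic relabelling of the variables composed with $M$ is exactly how the paper proceeds---but the heart of your argument, extracting an obstruction from the \emph{degree-two} part of $M(w)$, is precisely the step you leave open, and it cannot be completed. First, the combinatorial principle you invoke is false: a cyclic word of length $n$ in $n$ distinct letters \emph{is} fixed by the corresponding $n$-cycle relabelling (that is exactly what a cyclic order is; the cyclic word $12\cdots n$ is sent to $23\cdots n1$, the same cyclic word). Second, and concretely, the degree-two coefficients do not pin down the linear order of the factors once conjugators are allowed: each conjugating letter shifts a pair $(c_{ab},c_{ba})$ by $(\pm1,\mp1)$, preserving only $c_{ab}+c_{ba}=1$, and this is enough freedom to reach relabelling-invariant configurations. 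For instance, with $n=3$ and $\beta=\chi_{31}$ one gets $w=\phi(\beta)(x_1x_2x_3)=x_1x_2\,x_1^{-1}x_3x_1$, whose degree-two Magnus part is $X_1X_2+X_2X_3+X_3X_1$, which is invariant under any cyclic relabelling of $X_1,X_2,X_3$. So no functional of the $c_{ij}$ alone can yield the strict inequality; the ``main obstacle'' you flag is a genuine obstruction, not bookkeeping.

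The paper's proof avoids this by working in top degree $n$ rather than degree two. There the relabelling by an $n$-cycle acts \emph{freely} on the square-free degree-$n$ monomials (such a monomial $X_{\alpha_1}\cdots X_{\alpha_n}$ uses every index exactly once, so a power of the cycle can fix it only if it is the identity), hence every orbit has size exactly $n$ and any invariant polynomial has degree-$n$ coefficient sum divisible by $n$. One then checks that the degree-$n$ coefficient sum of $M\big(\phi(\beta)(x_1\cdots x_n)\big)$ is always $1$: it is unchanged by $\phi(\rho_i)$ (a relabelling) and by $\phi(\sigma_i)$ (the substitution $X_i\mapsto X_i+X_iX_{i+1}-X_{i+1}X_i$ creates new degree-$n$ monomials only in cancelling pairs), and it equals $1$ on $x_1\cdots x_n$ itself. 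Since $1\not\equiv 0\pmod n$, we get $M\big(\phi(\lambda)(w)\big)\neq M(w)$. To rescue your approach you would have to pass to this top-degree functional; degree two is insufficient for every $n\geq 3$ (it happens to coincide with degree $n$ only when $n=2$).
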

\begin{proof}
Let us define the map $F:\mathcal{A}_n\to\mathbf{Z}$ which sends a polynomial to the sum of the coefficients of its monomials of degree $n$. On the one hand, we observe that 
\[F\Big(M\big(\phi(\rho_i)(x)\big)\Big)=F\big(M(x)\big) \quad \text{and}\quad F\Big(M\big(\phi(\sigma_i)(x)\big)\Big)=F\big(M(x)\big),\] for any $x\in\mathcal RF_n$ and any $i$. The first equality is clear since $M\big(\phi(\rho_i)(x)\big)$ is obtained from $M(x)$ by permuting the variables $X_i$ and $X_{i+1}$. The second equality is less obvious: indeed, $M\big(\phi(\sigma_i)(x)\big)$ is obtained from $M(x)$ by first permuting the variables $X_i$ and $X_{i+1}$ and then substituting $X_i$ with $X_i + X_iX_{i+1} - X_{i+1}X_i$, potentially introducing new monomials of degree $n$. However, these extra monomials appear in pairs and with opposite signs and thus do not change the value of $F$. Therefore, we have that 
$$F\big(M\big(\phi(\beta)(x)\big)\big)=F\big(M(x)\big),$$
for any $\beta\in hWB_n$ and any $x\in\mathcal RF_n$. Moreover note that,
\begin{align*}
    F\big(M(x_1x_2\cdots x_n)\big)&=F\big((1+X_1)(1+X_2)\cdots(1+X_n)\big),\\
    &=1,
\end{align*}
hence $F\big(M\big(\phi(\beta)(x_1x_2\cdots x_n)\big)\big)=1$ for any $\beta\in hWB_n$.
But on the other hand, for any $\omega\in\mathcal RF_n$, $M\big(\phi(\lambda)(\omega)\big)$ is obtained from $M(\omega)$ by permuting the variables $X_i$ cyclically. Hence a necessary condition for having $M\big(\phi(\lambda)(\omega)\big)=M(\omega)$ is that $$F\big(M(\omega)\big)\equiv0\quad(\mathrm{mod}\ n).$$
Therefore, $\phi(\lambda)\circ\phi(\beta)(x_1x_2\cdots x_n)\neq\phi(\beta)(x_1x_2\cdots x_n)$ for any $\beta\in hWB_n$.
\end{proof}

We can finally prove the main theorem of this paper.
\begin{thm}\label{thmtorsionfree}
The homotopy braid group $hB_n$ is torsion-free for any number of components $n$.
\end{thm}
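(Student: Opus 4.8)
The plan is to argue by contradiction, simply assembling the characterization of torsion from Lemma \ref{lemtorsionweldedconj} with the two algebraic constraints of Lemmas \ref{lemclassicalcara} and \ref{lemnofixptswelded}, read off through the Artin representation $\phi$. Suppose $hB_n$ contains a nontrivial torsion element. Then by Lemma \ref{lemtorsionweldedconj} there is a prime $p\leq n$ such that the welded braid $\lambda=\rho_1\rho_2\cdots\rho_{p-1}\in hWB_p$ is conjugate to a classical braid; fix $\gamma\in hWB_p$ with $\beta:=\gamma\lambda\gamma^{-1}$ lying in the classical subgroup $hB_p\subset hWB_p$.

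The next step is to evaluate both descriptions of $\beta$ through $\phi$ on the element $x_1x_2\cdots x_p\in\mathcal{R}F_p$. On one hand, since $\beta$ is classical, Lemma \ref{lemclassicalcara} gives $\phi(\beta)(x_1x_2\cdots x_p)=x_1x_2\cdots x_p$. On the other hand, $\phi$ is a homomorphism, so $\phi(\beta)=\phi(\gamma)\circ\phi(\lambda)\circ\phi(\gamma)^{-1}$ with $\phi(\gamma)^{-1}=\phi(\gamma^{-1})$; applying $\phi(\gamma^{-1})$ to the fixed-point equation and rearranging yields $\phi(\lambda)\bigl(\phi(\gamma^{-1})(x_1x_2\cdots x_p)\bigr)=\phi(\gamma^{-1})(x_1x_2\cdots x_p)$. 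This is precisely the equality that Lemma \ref{lemnofixptswelded}, applied with the welded braid $\gamma^{-1}\in hWB_p$, forbids. The contradiction shows $hB_n$ is torsion-free, and Corollary \ref{coroBntorsionfree} is then obtained by transporting any torsion of $B_n$ down to $hB_n$.

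The point worth stressing is that the difficulty has already been absorbed by the preceding lemmas, so the theorem itself reduces to a short diagram chase through $\phi$. The genuinely hard inputs are, first, Lemma \ref{lemtorsionweldedniceform}, where homotopy arrow calculus (the expanding rule, triviality of repeated $w$-trees, and degree-by-degree sorting using the head/tail exchange moves of Lemma \ref{lemexchange}) is used to normalize an arbitrary $n$-cycle welded braid to the form $\mathbf 1_F\cdot\lambda$ with every head on the last component; and second, Lemma \ref{lemnofixptswelded}, where the reduced Magnus expansion together with the top-degree coefficient-sum functional $F$ shows that no element in the $\phi(hWB_n)$-orbit of $x_1\cdots x_n$ can be fixed by $\phi(\lambda)$, because its degree-$n$ coefficient sum would have to be simultaneously equal to $1$ and divisible by $n$. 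Once these two ingredients are in place, I expect no further obstacle in proving Theorem \ref{thmtorsionfree}.
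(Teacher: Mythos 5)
Your proof is correct and follows essentially the same route as the paper: invoke Lemma \ref{lemtorsionweldedconj} to reduce to $\lambda$ being conjugate to a classical braid, apply Lemma \ref{lemclassicalcara} to get a fixed-point equation for $\phi$ on $x_1x_2\cdots x_p$, conjugate it into the form forbidden by Lemma \ref{lemnofixptswelded}, and conclude. The only difference is notational (you write the conjugate as $\gamma\lambda\gamma^{-1}$ where the paper uses $\beta^{-1}\lambda\beta$), and your closing remarks correctly identify where the real work lies.
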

\begin{proof}
Suppose by contradiction that there is a torsion element in $hB_n$. By Lemma \ref{lemtorsionweldedconj} there exists a prime number $p\leq n$ and some braid $\beta\in hWB_p$ such that $\beta^{-1}\lambda\beta$ is a classical braid. According to Lemma \ref{lemclassicalcara} this conjugate must satisfy,
$$\phi\big(\beta^{-1}\lambda\beta\big)\big(x_1x_2\cdots x_p)=x_1x_2\cdots x_p,$$
or equivalently,
$$\phi(\lambda)\circ \phi(\beta)(x_1x_2\cdots x_p)= \phi(\beta)(x_1x_2\cdots x_p).$$
This yields a contradiction by Lemma \ref{lemnofixptswelded}.
\end{proof}

It follows from Theorem \ref{thmtorsionfree} that the standard braid group $B_n$ is torsion-free for all $n$. To prove this corollary, we need the following well-known proposition, which essentially dates back to Artin \cite{ArtinBraid}.
\begin{prop}\label{propPntorsionfree}
The pure braid group $P_n$ is torsion-free for any number of components $n$.
\end{prop}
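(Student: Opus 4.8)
The plan is to combine the classical fibration-theoretic description of the pure braid group with the elementary fact that torsion-freeness is inherited by group extensions. Concretely, forgetting the last strand yields the Fadell--Neuwirth short exact sequence
$$1 \longrightarrow F_{n-1} \longrightarrow P_n \longrightarrow P_{n-1} \longrightarrow 1,$$
where $F_{n-1}$ denotes the free group of rank $n-1$ (see \cite{FadellNeuwirthConfSpace}); this is essentially the only non-formal ingredient, and it exhibits $P_n$ as an iterated extension of free groups, which is the content of the claim that this fact goes back to Artin.

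The elementary fact I would use is the following: given a short exact sequence $1 \to N \to G \to Q \to 1$ with $N$ and $Q$ both torsion-free, the group $G$ is torsion-free. Indeed, if $g \in G$ satisfies $g^k = 1$ for some $k \geq 1$, then its image $\bar g$ in $Q$ satisfies $\bar g^k = 1$, hence $\bar g = 1$ because $Q$ is torsion-free; thus $g \in N$, and then $g = 1$ because $N$ is torsion-free.

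With these two inputs I would proceed by induction on $n$. The base case $n = 1$ is immediate, since $P_1$ is trivial, hence torsion-free. For the inductive step, assume $P_{n-1}$ is torsion-free; since $F_{n-1}$ is free, and in particular torsion-free, the short exact sequence above together with the elementary fact forces $P_n$ to be torsion-free.

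I do not anticipate any genuine obstacle here: once the Fadell--Neuwirth sequence is granted, the argument is a one-line diagram chase iterated over $n$, so the only point worth isolating is the extension lemma. (Alternatively, one could deduce the statement from Dehornoy's left-orderability of $B_n$ \cite{DehornoyBraidGrpLeftDistribuOp}, since left-orderable groups are torsion-free, but that invokes a substantially deeper result than the present situation requires.)
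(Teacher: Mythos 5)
Your argument is correct and is essentially the standard proof the paper is alluding to when it says the proposition ``essentially dates back to Artin'': the paper itself gives no proof, and Artin's combing exhibits $P_n$ as an iterated extension of free groups, which is exactly what the Fadell--Neuwirth sequence packages for you. The extension lemma and the induction are both sound, so there is nothing to fix.
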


We recover in this way a result of Fadell and Neuwirth (see Remark \ref{rmqdehornoy}).
\begin{cor}\label{coroBntorsionfree}
The braid group $B_n$ is torsion-free for any number of components $n$.
\end{cor}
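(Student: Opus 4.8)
The final statement to prove is Corollary~\ref{coroBntorsionfree}: the braid group $B_n$ is torsion-free for every $n$. The plan is to deduce this from Theorem~\ref{thmtorsionfree} together with Proposition~\ref{propPntorsionfree}, using exactly the same strategy that reduced the homotopy case to the pure subgroup. The point is that link-homotopy is a \emph{quotient} relation on classical braids, so there is a natural surjection $B_n \twoheadrightarrow hB_n$, and $P_n$ surjects onto $hP_n$; a torsion element of $B_n$ must map to a torsion element of $hB_n$, which by Theorem~\ref{thmtorsionfree} is trivial, so the torsion element lands in the kernel and in particular in $P_n$.

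More precisely, I would argue as follows. Suppose $\beta \in B_n$ is a nontrivial torsion element; replacing it by a suitable power we may assume its order is a prime $p$. Consider the associated permutation $\pi(\beta) \in S_n$. If $\pi(\beta) \neq \mathrm{Id}$, then the image of $\beta$ in $hB_n$ has nontrivial associated permutation (the permutation homomorphism factors through $hB_n$), hence is a nontrivial element of $hB_n$ of order dividing $p$, hence of order exactly $p$ — contradicting Theorem~\ref{thmtorsionfree}. Therefore $\pi(\beta) = \mathrm{Id}$, i.e.\ $\beta \in P_n$. But $P_n$ is torsion-free by Proposition~\ref{propPntorsionfree}, so $\beta$ is trivial, a contradiction. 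Hence $B_n$ has no nontrivial torsion.

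There is really no serious obstacle here; the corollary is a formal consequence of the theorem and the (classical, already-cited) torsion-freeness of $P_n$. The only mild subtlety is making sure one only invokes structure that is genuinely available: the existence of the natural projection $B_n \to hB_n$ (which is immediate from the definitions, since welded link-homotopy on classical braid diagrams restricts to ordinary link-homotopy and every Reidemeister move used for $B_n$ is permitted), the fact that the underlying-permutation map $B_n \to S_n$ factors through $hB_n$, and Proposition~\ref{propPntorsionfree} itself. One could alternatively phrase the whole thing via the short exact sequence $1 \to P_n \to B_n \to S_n \to 1$ and note that a torsion element mapping to a $p$-cycle would, by the reduction in Lemma~\ref{lemtorsionorder} applied verbatim in the classical setting, produce a torsion element in $hB_p$ — but the cleaner route is just the direct argument above. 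I would therefore keep the proof to three or four lines, citing Theorem~\ref{thmtorsionfree}, Proposition~\ref{propPntorsionfree}, and the elementary remarks about the projection and the permutation map.
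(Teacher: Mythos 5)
Your proof is correct and follows essentially the same route as the paper: both use the projection $B_n \to hB_n$, observe via Theorem~\ref{thmtorsionfree} that a torsion element must land in the kernel (equivalently, must have trivial underlying permutation, since the permutation map factors through $hB_n$), hence lies in $P_n$, and then conclude by Proposition~\ref{propPntorsionfree}. The only cosmetic difference is that the paper cites Goldsmith for the inclusion $\ker(p)\subset P_n$, whereas you derive it directly from the factorization of the permutation homomorphism; the argument is the same.
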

\begin{proof}
Let us consider the projection $p: B_n \to hB_n$. Since $hB_n$ is torsion-free (Theorem \ref{thmtorsionfree}), any torsion element in $B_n$ must belong to the kernel $K := \ker(p)$. However, it is clear from \cite{GoldsmithHomotopybraids} that $K \subset P_n$, thus $K$ is torsion-free by Proposition \ref{propPntorsionfree} and the proof is complete.
\end{proof}
\begin{rmq}\label{rmqdehornoy}
The study of torsion in braid groups dates back to Fadell and Neuwirth in 1962.\footnote{We mention however a more geometric proof, given in \cite[\S2]{GonzalesBasicResultBraidGrp} and based on the earlier works of \cite{NielsenAbbildungsklassenEndlicherOrdnung,KerekjartoUberPerioTransfoKreisscheibeKugelflache,EilenbergTransfoPerioSurfSphere}.} Building upon topological methods, they show in \cite[Thm. 8]{FadellNeuwirthConfSpace} that $B_n$ is torsion-free for every $n$. One may wonder whether a similar methods may provide alternative proof of the torsion-freeness in homotopy braid groups. However, to date, there is no known classifying space for $hB_n$ that would enable such a demonstration. Moreover, another common proof of torsion-freeness in $B_n$ appears in \cite{DehornoyBraidGrpLeftDistribuOp}, where Dehornoy established the stronger property of left-orderability. One may wonder again if a similar proof applies for $hB_n$, but the question of orderability in this group is still open.
\end{rmq}

\bibliographystyle{alpha}
\bibliography{bibli.bib}
\addcontentsline{toc}{chapter}{Bibliography}

\end{document}